\newenvironment{keywords}{\footnotesize{\bf Keywords: }}{}
\newenvironment{AMS}{\footnotesize{\bf AMS subject classification: }}{}
\newtheorem{theorem}{Theorem}[section]
\newtheorem{lemma}[theorem]{Lemma}
\newtheorem{proposition}[theorem]{Proposition}
\newtheorem{remark}{Remark}[section]
\numberwithin{equation}{section}
\numberwithin{figure}{section}
\numberwithin{equation}{section}
\numberwithin{figure}{section}
\long\def\symbolfootnote[#1]#2{\begingroup\def\thefootnote{\fnsymbol{footnote}}
\footnote[#1]{#2}\endgroup}
\renewcommand{\paragraph}[1]{}
\renewcommand{\includegraphics}[1]{\framebox{Graphics Placeholder}}
\renewcommand{\includegraphics}[2][1]{\framebox{Graphics Placeholder}}
\author{Alexander V.\ Shapeev\thanks{Section of Mathematics, Swiss Federal Institute of Technology (EPFL), Station 8, CH-1015, Lausanne, Switzerland 
        ({\tt alexander@shapeev.com}).
        }\thanks{Present address: 
                School of Mathematics, 206 Church St. SE, University of Minnesota, Minneapolis,
                MN 55455, US}
}
\title{Consistent Energy-based Atomistic/Continuum Coupling for Two-body Potentials in Three Dimensions\thanks{Published in {\it SIAM J. Sci. Comput.}, {\bf 34}(3), B335--B360, 2012. The work was performed during the author's stay at the Chair of Computational Mathematics and Numerical Analysis (ANMC) at the Swiss Federal Institute of Technology (EPFL) whose support is acknowledged.}}
\newcommand{\cf}{cf.\thinspace}
\newcommand{\smfrac}[2]{{\textstyle\frac{#1}{#2}}}
\newcommand{\U}{{\mathcal U}}
\newcommand{\bbR}{{\mathbb R}}
\newcommand{\bbN}{{\mathbb N}}
\newcommand{\bbZ}{{\mathbb Z}}
\newcommand{\calL}{{\mathcal L}}
\newcommand{\calU}{{\mathcal U}}
\newcommand{\calR}{{\mathcal R}}
\newcommand{\calB}{{\mathcal B}}
\newcommand{\calT}{{\mathcal T}}
\newcommand{\calO}{{\mathcal O}}
\newcommand{\calX}{{\mathcal X}}
\newcommand{\C}{{\rm C}}
\renewcommand{\L}{{\rm L}}
\newcommand{\W}{{\rm W}}
\def\del{\delta\hspace{-1pt}}
\def\ddel{\delta^2\hspace{-1pt}}
\def\<{\langle}
\def\>{\rangle}
\newcommand{\delE}{\del E}
\newcommand{\Dc}[1]{\nabla_{\!#1}}
\newcommand{\Da}[1]{D_{\hspace{-1pt}#1}}
\def\mF{{\sf F}}
\def\mM{{\sf M}}
\def\hA{{\hat A}}
\def\hB{{\hat B}}
\def\hC{{\hat C}}
\def\hD{{\hat D}}
\newcommand{\cchi}{{\chi_{\mathstrut}}}
\newcommand{\eps}{\epsilon}
\def\ang{{\rm ang}}
\def\diam{{\rm diam}}
\def\c{{\rm c}}
\def\a{{\rm a}}
\def\D{{\rm D}}
\def\P{{\rm P}}
\def\dd{{\rm d}}
\def\db{{\rm db}}
\newcommand{\calTh}{{{\mathcal T}_h}}
\def\Xint#1{\mathchoice
{\XXint\displaystyle\textstyle{#1}}{\XXint\textstyle\scriptstyle{#1}}{\XXint\scriptstyle\scriptscriptstyle{#1}}{\XXint\scriptscriptstyle\scriptscriptstyle{#1}}\!\int}
\def\XXint#1#2#3{{\setbox0=\hbox{$#1{#2#3}{\int}$ }
\vcenter{\hbox{$#2#3$ }}\kern-.6\wd0}}
\def\mint{\Xint-}
\newcommand{\len}[1]{{\rm BondVol}(#1)}
\def\mod{\bmod}
\newcommand{\Trap}{{\rm Trap}}
\newcommand{\sgn}{{\rm sgn}}
\begin{document}
\sloppy
\maketitle

\begin{abstract}
Very few works exist to date on development of a consistent energy-based coupling of atomistic and continuum models of materials in more than one dimension.
The difficulty in constructing such a coupling consists in defining a coupled energy whose minimizers are free from uncontrollable errors on the atomistic/continuum interface.
In this paper a consistent coupling in three dimensions is proposed.
The main achievement of this work is to identify and efficiently treat a modified Cauchy--Born continuum model which can be coupled to the exact atomistic model.
The convergence and stability of the method is confirmed with numerical tests.
\end{abstract}

\begin{keywords}
Consistent energy-based atomistic/continuum coupling,
quasicontinuum method,
multiscale method,
three dimensions
\end{keywords}

\begin{AMS}
65N30,    70C20,    74G15,    74G65\end{AMS}

\pagestyle{myheadings}
\thispagestyle{plain}
\markboth{ALEXANDER V. SHAPEEV}{CONSISTENT ATOMISTIC/CONTINUUM COUPLING IN 3D}

\section{Introduction}

Modeling defects in crystalline solids requires using atomistic models.
On the one hand, defects create long-range elastic fields, the accurate resolution of which requires a huge number of atomistic degrees of freedom, often unhandleable even on modern computers.
On the other hand, the elastic deformation far away from a defect is well described by a continuum model.
This is a rationale for {\it atomistic/continuum} (A/C) coupled methods---the methods that use full atomistic resolution around a defect, coupled to a coarse-grained continuum model far from the defect \cite{MillerTadmor2002, MillerTadmor2009, VanLiLuskinEtAl}.

Consider the problem of finding an equilibrium of a certain atomistic crystal with a localized defect, i.e., finding a critical point of a potential energy of such a crystal.
An A/C coupling approach to this problem would be to consider the exact energies of the atomistic deformation near the defect and a Cauchy--Born continuum energy (cf.\ \cite{BornHuang1954}) of a $P_1$ finite element discretization of the deformation field far from the defect.
The efficiency of an A/C coupling rests on the fact that the complexity of computing the energy and the effective forces associated with an element is independent of the size of the element (which would not be true if the full atomistic model were used everywhere).

The two main variants of an A/C coupling are the energy-based and the force-based coupling, the first defines an A/C coupled energy that depends on the atomistic and continuum deformation, while the second mixes the atomistic and the continuum forces (i.e., derivatives of the energy of the atomistic model and the continuum model); see \cite{MillerTadmor2009, Shapeev2011, VanLiLuskinEtAl} and the references therein for more details.
The force-based coupling can indeed effectively approximate the exact atomistic equilibrium equations; however, its stability properties are not well understood at present \cite{DobsonLuskinOrtner2010a, DobsonLuskinOrtner2010b, MillerTadmor2009}, and indeed there seem to exist examples of a force-based coupling of stable atomistic and continuum equations being unstable \cite{Shapeev:qcf_stability:in_progress}.

When using energy-based methods, one faces a different kind of challenge: it turns out to be difficult to design a coupling which is at least first-order consistent (a first order consistency is equivalent to a first-order convergence provided that there is stability).\footnote{In the engineering-oriented literature, a lack of consistency is formulated in terms of fictitious forces called ``ghost forces.''}
Despite the recent progress in designing a consistent energy-based A/C coupling \cite{ELuYang2006, IyerGavini2011, OrtnerZhang, Shapeev2011, ShimokawaMortensenSchiotzEtAl2004}, no satisfactory solution exists in the general case to date.
We should also note that it has been recently shown that the so-called {\it blended} coupling methods (which assume a smooth transition between the atomistic and the continuum models) can address the stability issues of the force-based coupling \cite{LiLuskinOrtner, LuMing} and the consistency issues of the energy-based coupling \cite{LuskinVanKoten, LuskinOrtnerVanKoten}.

One of the recent developments
is the work \cite{Shapeev2011}, where the author proposed a consistent A/C coupling for two-body interaction in two dimensions (2D).
The key instrument in constructing a consistent coupling in \cite{Shapeev2011} was the two-dimensional {\it bond density lemma}, which asserts that the effective number of atomistic bonds in a certain direction $r\in\bbZ^2$ lying on any triangle with vertices restricted to the lattice $\bbZ^2$ is equal to the area of the triangle, regardless of the direction $r$.
This lemma allows one to define the A/C coupling method in terms of the energy of individual bonds and the show that continuum approximations of bond energies sum up to a (discretized) Cauchy--Born energy, up to some correction near the interface.

The purpose of this paper is to extend the method of \cite{Shapeev2011} to the three-dimensional case.
Unfortunately, the three-dimensional analogue of the bond density lemma is not true: the number of bonds lying in a tetrahedron depends on the bond direction and in general is not equal to the volume of the tetrahedron.
This makes the extension to three dimensions (3D) not trivial.

The construction of the method in 3D is similar to the lower-dimensional construction: we first define the continuum energy of bonds consistent with the exact energy of the bonds and then show that the sum of continuum energies of bonds can be computed efficiently.
The resulting three-dimensional continuum model turns out to be different from the Cauchy--Born model (this is a consequence of the lack of the three-dimensional bond density lemma).
Nonetheless, numerical tests conducted confirm a computational efficiency and accuracy similar to that in 2D \cite{OrtnerShapeev:qce_analysis:preprint, Shapeev2011}.

The paper is organized as follow.
We formulate the proposed A/C coupling in Section \ref{sec:3d_coupling} and define the effective number of bonds within a tetrahedron, $\len{T,r}$; efficient computation of this quantity is central to the overall efficiency of the proposed method.
Section \ref{sec:computing_Len} is dedicated entirely to an efficient algorithm for computing $\len{T,r}$, and a Matlab code of this algorithm is given in Appendix \ref{sec:matlab}.
In Section \ref{sec:num} we present numerical tests of accuracy and stability, and in Section \ref{sec:conclusion} we give concluding remarks.

\section{Consistent A/C Coupling}\label{sec:3d_coupling}

For generality, we present the A/C coupling in $\bbR^d$, but will focus mainly on case $d=3$.
The cases $d=2$ (considered in \cite{Shapeev2011}) and $d=1$ (considered in \cite{LiLuskin2011_finite.range, Shapeev2011}) will be particular cases of the discussion below. 

In Section \ref{sec:3d_coupling:geom} we define the continuum and discrete regions, the triangulation of the continuum region, and the corresponding functional spaces.
In Section \ref{sec:3d_coupling:atomistic_model} we present an atomistic interaction in terms of bond energies.
Finally, in Section \ref{sec:3d_coupling:coupling} we formulate the proposed A/C coupling.

\subsection{A/C Coupling Geometry} \label{sec:3d_coupling:geom}

\begin{figure}
\begin{center}
\includegraphics{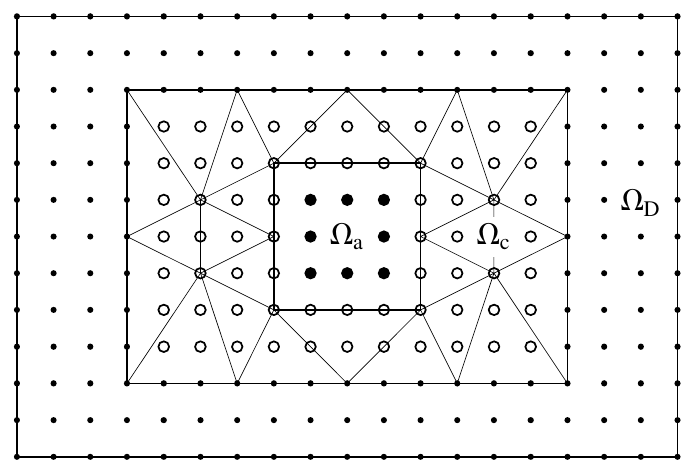}
\end{center}
\caption{Geometry of an A/C interface.
	Black atoms belong to the (discrete) atomistic region $\Omega_\a$, and white atoms belong to the continuum region $\Omega_\c$.
	The small atoms, belonging to $\Omega_\D$, are involved in imposing the Dirichlet-type boundary conditions.
	The discrete domains are, respectively, $\calL_\a$, $\calL_\c$, and $\calL_\D$.
}
\label{fig:ac-geom}
\end{figure}

Consider an atomistic material occupying a bounded domain $\Omega \subset \bbR^d$ in its undeformed (reference) state.
Assume a splitting of $\Omega$ into three (open) regions: $\Omega_\a$, where the exact atomistic model will be used; $\Omega_\c$, where a continuum approximation will be used; and $\Omega_\D$ containing atoms whose positions will be fixed (when posing Dirichlet-type boundary conditions); see Figure \ref{fig:ac-geom} for an illustration.
The atom positions in the undeformed state comprise the lattice $\calL = \overline{\Omega}\cap \bbZ^d$ (where $\overline{\mathstrut\bullet\mathstrut}$ denotes a closure of a set).
We also denote $\calL_\D = \calL\cap\overline\Omega_\D$, $\calL_\c = (\calL\cap\overline\Omega_\c)\setminus\calL_\D$, and $\calL_\a = \calL\setminus(\calL_\D\cup\calL_\c)$.
Normally, the number of atoms in $\calL_\a$ is much less than the total number of atoms.

It should be stressed that although we assume perfect crystalline lattices without defects in the reference configuration, the computational method can be presented if defects are allowed in the atomistic region.

We define the space of admissible deformations, $\calU$, as a set of discrete functions $\calL\to\bbR^d$ whose values on $\calL_\D$ are set according to the uniform deformation gradient $\mF\in\bbR^{d\times d}$, i.e.
\[
\calU := \{u:\calL\to\bbR^d :\, u(x) = \mF x~\forall x\in\calL_\D \}
,
\]
and we define the space of admissible displacements as
\[
\calU_0 := \{u\in\calU :\, u|_{\calL_\D}=0\}
.
\]
Additional assumptions on $\calL_\D$ will be made later to avoid unnecessary complications due to boundary effects.

We assume that $\Omega_\c$ is a polytope (i.e., polyhedron for $d=3$) and $\calTh$ is its triangulation with simplices $T\in\calTh$.
The spaces of A/C deformations and admissible displacements are defined as
\begin{align*}
\calU^h = \{u^h : \overline\Omega_\c\cup\calL_\a \to \bbR^d : 
~& u(x) = \mF x~\forall x\in\calL_\D,
\\ &\text{$u^h$ is continuous on $\Omega_\c$ and is affine on each $T\in\calTh$} \},
\end{align*}
and $\calU^h_0 := \{u^h\in\calU^h :\, u^h|_{\calL_\D}=0\}$.

At this stage, we do not require that the vertices of simplices $T\in\calTh$ lie on the lattice $\calL$, as was assumed in \cite{Shapeev2011}.
The method will be formulated for a general triangulation $\calTh$; however, the algorithm (Section \ref{sec:computing_Len}) will be developed under such an assumption.

\subsection{Bond Formulation of the Atomistic Model}\label{sec:3d_coupling:atomistic_model}

We assume that the atomistic interaction is given by a set of neighbors $\calR\subset\bbZ^d\setminus\{0\}$ and a two-body potential $\phi:\bbR^d\to\bbR$.
Define an interval $(x, x+r)$ between two points $x, x+r\in\bbR^d$ as a set
\[
(x, x+r) := \left\{x+\lambda r:\, \lambda\in(0,1)\right\}
,
\]
and call it a {\it bond;} $r$ will be called a {\it direction} of a bond.
Introduce the finite difference associated with a bond $b=(x,x+r)$ or a bond direction $r$, i.e.,
\[
\Da{b} y := \Da{r}\,y(x) := y(x+r)-y(x),
\]
and let the energy of the atomistic model be
\begin{equation} \label{eq:manyD:E-definition} 
E(y) := \sum_{b\in\calB} \phi(\Da{b} y),
\end{equation}
where
\[
\calB = 
	\{
		(x,x+r) :\, 
		x,x+r\in\calL,~r\in\calR
	\}
\]
is the collection of all bonds in the system, and $\phi(z)$ is the energy of interaction of two atoms with $z\in\bbR^d$ being the position of one atom relative to another atom.
The variational equilibrium condition for $y\in\calU$ (under no external force) is then
\[
\<\delE(y), v\>=0
\quad\forall v\in\calU_0
,
\]
where $\<\bullet,\bullet\>$ is the scalar product in $\calU$, and $\delE(y)\in\calU$ is the G\^ateaux derivative of $E$ defined as $\<\delE(y), v\> := \frac\partial{\partial\alpha} E(y+\alpha v)\big|_{\alpha=0}$.
Mechanically, $\delE(y)$ are the interaction forces on the deformation $y$.

We assume that ${\rm dist}(\partial\Omega, \Omega\setminus\Omega_\D) \geq \max_{r\in\calR} |r|$ so that the following discrete version of the divergence theorem holds:
\[
\sum_{x\in\calL\cap(\calL-r)} D_r v(x) = 0
\quad \forall v\in\U_0,~ \forall r\in\calR.
\]
It is then easy to verify (see \cite[eq.\ (2.6)]{Shapeev2011}) that a uniform deformation $y_\mF(x) := \mF x$ is an equilibrium; i.e., $\<\delE(y_\mF), v\>=0$.

In the next subsection we propose an A/C coupling $E^h(y)$, which is an approximation of $E(y)$, such that
\begin{equation}\label{eq:Eh_is_consistent}
\del E^h(y_\mF)=0;
\end{equation}
in other words, $E^h(y)$ is {\it patch-test consistent}.
This method will be a generalization of the one-dimensional method \cite{LiLuskin2011_finite.range, Shapeev2011} and the two-dimensional method \cite{Shapeev2011} (more precisely, the version of the method labeled as ECC in \cite{Shapeev2011}).

\subsection{The Proposed A/C Coupling}\label{sec:3d_coupling:coupling}

Define the set of continuum bonds $\calB_\c := \{b\in\calB \,:~ b\subset\Omega_\c\}$ and the continuum directional derivative (associated with the bond direction $r$)
\[
\Dc{r}\,y(x) := \lim_{\eps\to0} \big(f(x+\eps r)-f(x)\big).
\]
Further, define averaging over a bond $b=(x,x+r)$:
\[
\mint_{x}^{x+r} f(x) \,\db = \mint_b f(x) \,\db := \int_{0}^1 f(x+\lambda r) \dd\lambda.
\]

The proposed (patch-test) consistent A/C coupling then reads as
\begin{equation} \label{eq:Eh-def}
E^h(y) := \sum_{b\in\calB_\a} \phi(\Da{b} y) + \sum_{b\in\calB_\c} \mint_b \phi(\Dc{r_b}y) \db
,
\end{equation}
where $\calB_\a := \calB\setminus\calB_\c$ and $r_b$ denotes the direction of $b\in\calB$.
The formulation of an A/C coupling in terms of bond averages was first proposed in \cite{Ortner2011} for the quasinonlocal quasicontinuum method \cite{ShimokawaMortensenSchiotzEtAl2004} in one dimension.
The patch-test consistency (condition \eqref{eq:Eh_is_consistent}) for this coupling follows from the fact that the variation of the continuum energy of a bond $b\in\calB_\c$ is equal to the variation of the exact energy on a uniform deformation $y_\mF$:
\[
\big\<\del\big(\textstyle \mint_b \phi(\Dc{r_b}y_\mF)\big)\db, v\big\>
=
\mint_b \phi'(\mF r_b)\cdot \Dc{r_b} v \db
=
\phi'(\mF r_b)\cdot \Da{r_b} v
=
\big\<\del \phi(\Da{r_b}y_\mF), v\big\>,
\]
where $\phi'$ denotes the gradient of $\phi$.
The proof of patch-test consistency is completely analogous to the proof of the two-dimensional version of this statement \cite[Prop.\ 3.2]{Shapeev2011}.

A straightforward calculation (see Proposition \ref{prop:bond_to_volume} below) allows us to convert bond integrals in \eqref{eq:Eh-def} into a sum over elements (i.e., effectively, volume integrals):
\begin{equation}\label{eq:coupled_energy_volumetric}
E^h(y) = \sum_{b\in\calB_\a} \phi(\Da{b} y) + \sum_{T\in\calTh} \sum_{r\in\calR} \Omega_{T,r} \phi(\Dc{r}y|_T),
\end{equation}
where the effective volumes of $T$ are defined as
\begin{equation}
\label{eq:OmegaTr}
\Omega_{T,r} := \sum_{\substack{x\in\bbZ^d \\ (x,x+r)\in\calB_\c}} \mint_x^{x+r} \cchi_T \db
\end{equation}
and the characteristic function $\chi_\bullet$ is defined in Section \ref{sec:characteristic_function}.

It should be noted that the second term in \eqref{eq:coupled_energy_volumetric} differs from the standard Cauchy--Born energy (for it to be equal to the Cauchy--Born energy, we must have $\Omega_{T,r}\equiv |T|$).
This in particular means that the existing results on stability \cite{EMing2007static, HudsonOrtner, OrtnerShapeev:qce_analysis:preprint} and first-order consistency \cite{Ortnera, OrtnerShapeev:qce_analysis:preprint, OrtnerZhang} of the Cauchy--Born model do not directly apply to the coupling proposed in the present paper.
However, the consistency analysis of \cite{OrtnerShapeev:qce_analysis:preprint} should be applicable to the proposed coupling as this analysis is based mainly on the bond formulation \eqref{eq:Eh-def} and does not require the exact bond density lemma (i.e, we would need only a result of the form $\Omega_{T,r} \leq c |T|$).
In the present paper, convergence and stability of the proposed coupling are confirmed via numerical tests in Section \ref{sec:num}.

\subsubsection{Characteristic Function}\label{sec:characteristic_function}

For a polytope $\omega \subset \bbR^d$ (e.g., polyhedron in 3D) define a characteristic function
\begin{equation}\label{eq:chi_def}
\chi_\omega(x) := \lim_{\rho\to0} \frac{|\omega\cap B_\rho(x)|}{|B_\rho(x)|},
\end{equation}
where $B_\rho(x)$ is the ball centered at $x$ with the radius $\rho$.
We note that (i) the limit w.r.t.\ $\rho\to 0$ in the definition of $\cchi_{\omega}(x)$ exists, and (ii) including/excluding the boundary of a polytope $\omega$ (or any part of it) does not change the point values of $\cchi_{\omega}(x)$.
The characteristic function is defined so that if $\overline{\omega} = \overline{\omega_1} \cup \overline{\omega_2}$ and $\omega_1\cap\omega_2=\emptyset$, then $\chi_\omega = \cchi_{\omega_1} + \cchi_{\omega_2}$ pointwise.
In particular, we have
\begin{equation} \label{eq:Omegac-partition}
\cchi_{\Omega_\c}(x) = \sum\limits_{T\in\calTh} \cchi_T(x)
\quad\forall x\in\bbR^d
.
\end{equation}

The characteristic function of a 3D polyhedron $\omega$ can be visualized as
\begin{equation}
\cchi_{\omega}(x) =
\begin{cases}
1           &\textnormal{if $x\in$ interior of $\omega$},
\\
\frac{1}{2} &\textnormal{if $x\in$ face of $\omega$},
\\
\frac{\alpha}{2\pi} &\textnormal{if $x\in$ edge of $\omega$ with angle $\alpha$},
\\[0.2em]
\frac{\beta}{4\pi} &\textnormal{if $x$ is a vertex of $\omega$ with spherical angle $\beta$},
\\
0 & \textnormal{otherwise}.
\end{cases}
\label{eq:chi-3d}
\end{equation}
Note that the values of $\cchi_{\omega}(x)$ at the vertices of $\omega$ will not be important for the formulation of the method.

With this definition of the characteristic function, we can prove the following proposition.
\begin{proposition}\label{prop:bond_to_volume}
The energy \eqref{eq:Eh-def} is equivalently written as \eqref{eq:coupled_energy_volumetric}.
\end{proposition}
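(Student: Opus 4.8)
The plan is to start from the bond formulation \eqref{eq:Eh-def} and manipulate the continuum term $\sum_{b\in\calB_\c} \mint_b \phi(\Dc{r_b}y)\,\db$ into the volumetric form. First I would observe that on each simplex $T\in\calTh$ the A/C deformation $y=y^h$ is affine, so $\Dc{r}y$ is constant on $T$; denote this constant value $\Dc{r}y|_T$. Consequently, for a bond $b=(x,x+r)$ with $b\subset\Omega_\c$, the integrand $\phi(\Dc{r_b}y(\xi))$ as $\xi$ ranges over $b$ is piecewise constant, taking the value $\phi(\Dc{r}y|_T)$ on the portion of $b$ lying in $T$. The natural identity to exploit is that the fraction of the parameter interval $(0,1)$ for which $x+\lambda r$ lies in $T$ equals $\mint_b \cchi_T\,\db = \int_0^1 \cchi_T(x+\lambda r)\,\dd\lambda$ — here the choice of the characteristic function \eqref{eq:chi-3d}, which assigns boundary points fractional weights, is exactly what makes this hold cleanly and makes the boundary sets (of measure zero in $\lambda$) irrelevant, per remark (ii) after \eqref{eq:chi_def}.

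The key steps, in order, are: (1) fix $b\in\calB_\c$ and write $\mint_b \phi(\Dc{r_b}y)\,\db = \sum_{T\in\calTh} \phi(\Dc{r_b}y|_T)\, \mint_b \cchi_T\,\db$, using \eqref{eq:Omegac-partition} (i.e.\ $\cchi_{\Omega_\c}=\sum_T\cchi_T$ pointwise) together with the fact that $\cchi_{\Omega_\c}(x+\lambda r)=1$ for almost every $\lambda\in(0,1)$ since $b\subset\Omega_\c$; (2) substitute this into the continuum sum in \eqref{eq:Eh-def} and interchange the (finite) sums over $b\in\calB_\c$ and $T\in\calTh$; (3) group the bonds by direction, writing $\calB_\c = \bigcup_{r\in\calR}\{(x,x+r):x\in\bbZ^d,\,(x,x+r)\in\calB_\c\}$, so that the continuum sum becomes $\sum_{T}\sum_{r\in\calR}\phi(\Dc{r}y|_T)\sum_{x:(x,x+r)\in\calB_\c}\mint_x^{x+r}\cchi_T\,\db$; (4) recognize the inner sum as precisely $\Omega_{T,r}$ by definition \eqref{eq:OmegaTr}, yielding \eqref{eq:coupled_energy_volumetric}. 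The atomistic term $\sum_{b\in\calB_\a}\phi(\Da{b}y)$ is untouched and carries over verbatim.

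The main obstacle — really the only subtlety, since the algebra is otherwise routine rearrangement of finite sums — is justifying step (1) carefully: one must be sure that a bond $b\subset\Omega_\c$ contributes, under the integral $\int_0^1(\cdot)\dd\lambda$, only through the interiors of the simplices and the (one-dimensional Lebesgue-null) set of $\lambda$ at which $x+\lambda r$ crosses simplex boundaries is harmless. This is where the two noted properties of $\cchi$ are invoked: the pointwise partition identity \eqref{eq:Omegac-partition} holds everywhere (not just a.e.), and since $\phi(\Dc{r}y)$ is a genuine piecewise-constant function on $b$ whose jump set has measure zero, $\int_0^1\phi(\Dc{r_b}y(x+\lambda r))\,\dd\lambda = \sum_T \phi(\Dc{r}y|_T)\,|\{\lambda:x+\lambda r\in T\}| = \sum_T\phi(\Dc{r}y|_T)\,\mint_b\cchi_T\,\db$, the last equality again because $\cchi_T$ and the indicator of $T$ differ only on a $\lambda$-null set. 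I would also note in passing that the finiteness of all index sets ($\calB$, $\calTh$, $\calR$) makes every interchange of summation trivially legitimate, so no additional convergence argument is needed. One can then remark that this is the $d=3$ instance of the corresponding step in the two-dimensional treatment of \cite{Shapeev2011}, with the characteristic function \eqref{eq:chi-3d} replacing its lower-dimensional analogue.
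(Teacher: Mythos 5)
Your argument is correct and is essentially the paper's own proof: insert $\cchi_{\Omega_\c}=1$ along each bond $b\subset\Omega_\c$, expand via the partition identity \eqref{eq:Omegac-partition}, use the piecewise constancy of $\Dc{r}y$ on each $T$ to pull $\phi(\Dc{r}y|_T)$ out of the bond average, interchange the finite sums, and recognize \eqref{eq:OmegaTr}. The only difference is cosmetic (you decompose each bond integral first and then group by direction $r$, whereas the paper groups by direction first), so no further comparison is needed.
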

\begin{proof}
Indeed, the second sum in \eqref{eq:Eh-def} can be transformed as
\begin{align*}
\sum_{b\in\calB_\c} \mint_b \phi(\Dc{r_b}y) \db
=~&
\sum_{r\in\calR} \sum_{\substack{x\in\bbZ^d \\ (x,x+r)\subset\Omega_\c}} \mint_x^{x+r} \phi(\Dc{r}y) \db
\\=~&
\sum_{r\in\calR} \sum_{\substack{x\in\bbZ^d \\ (x,x+r)\subset\Omega_\c}}
	\mint_x^{x+r} \cchi_{\Omega_\c} \phi(\Dc{r}y) \db
\\=~&
\sum_{r\in\calR} \sum_{\substack{x\in\bbZ^d \\ (x,x+r)\subset\Omega_\c}}
	\mint_x^{x+r} \sum_{T\in\calT_h} \cchi_{T} \phi(\Dc{r}y) \db
\\=~&
\sum_{T\in\calT_h} \sum_{r\in\calR} \phi(\Dc{r}y|_T) \sum_{\substack{x\in\bbZ^d \\ (x,x+r)\subset\Omega_\c}}
	\mint_b \cchi_{T} \db
,
\end{align*}
where we used \eqref{eq:Omegac-partition} and the fact that $\cchi_{\Omega_\c}=1$ on any bond which lies inside $\Omega_\c$.
\end{proof}

\subsubsection{Complexity of Computing $E^h$}

The method \eqref{eq:Eh-def} with precomputing $\Omega_{T,r}$ directly using \eqref{eq:OmegaTr} may already yield a significant reduction in the number of operations.
Indeed, one must spend $\calO(\#\calB_\c)$ operations (here $\#$ denotes the number of elements in a set) on precomputing $\Omega_{T,r}$ only once for a given A/C geometry, and it would then take $\calO(\#\calB_\a)+\calO((\#\calT_h)(\#\calR))$ operations for computing the forces or assembling the stiffness matrix corresponding to \eqref{eq:Eh-def} (recall that $\#\calB_\a \ll \#\calB_\c$).

Furthermore, in the one-dimensional case and in the two-dimensional case with triangulation nodes coinciding with the lattice sites, the bond density lemma yields that $\Omega_{T,r} = |T|$ if $T$ is far enough from the A/C interface, and thus $E_\c(y)$ reduces to the standard Cauchy--Born energy up to an interface correction.
This removes the need for the precomputation step and yields an algorithm with an optimal complexity $\calO(\#\calB_\a)+\calO((\#\calT_h)(\#\calR))$.

Unfortunately, as also shown in \cite{Shapeev2011}, in general $\Omega_{T,r} \ne |T|$ in 3D.
(Numerical calculations of $\Omega_{T,r}$ with randomly generated $T$ and $r$ show that $\Omega_{T,r} \ne |T|$ for most choices of $T$ and $r$.)
Nevertheless, as will be shown in the present paper, one can design an algorithm for computing $\Omega_{T,r}$ efficiently.

To this end denote, for any polytope $\omega$,
\begin{equation}
\label{eq:len}
\len{\omega,r}
:=
\sum_{x\in\bbZ^3} \mint_x^{x+r} \chi_\omega \db
\end{equation}
so that $\Omega_{T,r}$ can be expressed through $\len{T,r}$ and the interface correction:
\begin{equation}\label{eq:OmegaTr_computation}
\Omega_{T,r} = \len{T,r} - \sum_{(x,x+r)\in\calB_\a} \mint_x^{x+r} \cchi_T \db
.
\end{equation}
Here the quantity $\len{\omega,r}$ is an effective number (volume) of all the bonds with direction $r$ that intersect a polytope $\omega$.
In section \ref{sec:computing_Len} we will show that $\len{T,r}$ can be computed with $\calO(\log(\diam(T)) + \log|r|)$ arithmetic operations in 3D, assuming that the vertices of $T$ lie on the lattice $\bbZ^3$.
This implies the following result.
\begin{theorem}\label{prop:algorithm}
Consider the following algorithm.
\begin{itemize}
\item[1(a).] Precompute $\len{T,r}$ as described in Sections \ref{sec:computing_Len:reductions}--\ref{sec:computing_Len:Sab} for all $T\in\calT_h$ and $r\in\calR$.
\item[1(b).] Precompute $\Omega_{T,r}$ by the formula \eqref{eq:OmegaTr_computation} for all $T\in\calT_h$ and $r\in\calR$.
\item[2.] Compute $E^h(y)$ by the formula \eqref{eq:coupled_energy_volumetric}.
\end{itemize}
Then the complexity of step 1(a) is
\begin{equation}\label{eq:algorithm:complexity}
\calO\big((\#\calTh)(\#\calR)\,(\log(\diam(\Omega))+\log(\diam(\calR)))\big)
,
\end{equation}
and the complexity of step 2 is $\calO(\#\calB_\a + (\#\calT_h)(\#\calR))$.
\end{theorem}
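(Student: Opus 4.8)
The plan is to assemble Theorem \ref{prop:algorithm} essentially by bookkeeping, once the per-element cost of computing $\len{T,r}$ is taken as given. The key ingredients are: (i) the complexity claim for computing a single $\len{T,r}$, namely $\calO(\log(\diam(T))+\log|r|)$ arithmetic operations, which is promised in Section \ref{sec:computing_Len} under the assumption that the vertices of $T$ lie on $\bbZ^3$; (ii) the conversion formula \eqref{eq:OmegaTr_computation} expressing $\Omega_{T,r}$ through $\len{T,r}$ plus an interface correction; and (iii) the volumetric form \eqref{eq:coupled_energy_volumetric} of the coupled energy, already established in Proposition \ref{prop:bond_to_volume}.

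For step 1(a), I would argue as follows. For each pair $(T,r)$ with $T\in\calTh$ and $r\in\calR$, computing $\len{T,r}$ costs $\calO(\log(\diam(T))+\log|r|)$ by the algorithm of Sections \ref{sec:computing_Len:reductions}--\ref{sec:computing_Len:Sab}. Since every $T\in\calTh$ satisfies $\diam(T)\le\diam(\Omega)$ and every $r\in\calR$ satisfies $|r|\le\diam(\calR)$, the cost of a single pair is bounded by $\calO(\log(\diam(\Omega))+\log(\diam(\calR)))$. Summing over all $\#\calTh$ simplices and all $\#\calR$ neighbor directions gives the stated bound \eqref{eq:algorithm:complexity}. (One caveat worth a sentence: $\diam(\calR)$ should be read as $\max_{r\in\calR}|r|$, consistent with the bound $\log|r|$ appearing in the per-element estimate.)

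For step 1(b) and step 2 I would note that step 1(b), using \eqref{eq:OmegaTr_computation}, requires for each $(T,r)$ a correction term that is a sum over bonds $(x,x+r)\in\calB_\a$ meeting $T$; since each such $T$ is near the A/C interface and $\#\calB_\a\ll\#\calB_\c$, this contributes at most $\calO(\#\calB_\a)$ work in total (each atomistic bond contributes to $\calO(1)$ simplices it passes through, up to a constant depending on $\calR$), plus $\calO((\#\calTh)(\#\calR))$ to initialize the $\Omega_{T,r}$ from the already-computed $\len{T,r}$ — so step 1(b) does not dominate. For step 2, evaluating $E^h(y)$ via \eqref{eq:coupled_energy_volumetric} requires $\#\calB_\a$ evaluations of $\phi(\Da{b}y)$ for the atomistic term and one evaluation of $\phi(\Dc{r}y|_T)$ for each of the $(\#\calTh)(\#\calR)$ element-direction pairs in the continuum term, each at $\calO(1)$ cost, giving $\calO(\#\calB_\a+(\#\calTh)(\#\calR))$; the same count applies to assembling forces or the stiffness matrix.

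The substantive content of the theorem is entirely contained in ingredient (i) — the $\calO(\log(\diam(T))+\log|r|)$ algorithm for a single $\len{T,r}$ — which is deferred to Section \ref{sec:computing_Len}; everything in the present proof is routine aggregation. Consequently the ``proof'' here is short: it is a one-line summation argument over $\calTh\times\calR$ for step 1(a), plus the observation that the precomputation \eqref{eq:OmegaTr_computation} and the final assembly \eqref{eq:coupled_energy_volumetric} are linear-time in the natural sizes. The only place requiring mild care is confirming that the interface-correction sums in step 1(b) do not secretly inflate the complexity — this is handled by the locality of $\calB_\a$ near the interface and the bound $\#\calB_\a\ll\#\calB_\c$ — and making the identification $\log(\diam(\calR))\leftrightarrow\max_{r}\log|r|$ explicit so that the statement matches the per-element estimate proved later.
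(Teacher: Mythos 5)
Your proposal is correct and follows essentially the same route as the paper: the paper's proof simply invokes Proposition \ref{prop:complexity_of_len} for the per-pair cost $\calO(\log(\diam(T))+\log|r|)$, sums over $\calTh\times\calR$ to get \eqref{eq:algorithm:complexity}, and declares the linear count for step 2 evident. Your added remarks on step 1(b) are harmless extras (the theorem makes no complexity claim about that step, and the paper defers its cost to a separate remark), so nothing in your argument conflicts with the paper.
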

\begin{proof}
The proof of complexity estimate for step 1a follows directly from Proposition \ref{prop:complexity_of_len}, and the complexity estimate for step 2 is evident.
\end{proof}
\begin{remark}
A straightforward algorithm for computing $\Omega_{T,r}$ has the complexity of $\calO((\#\calB_\a)(\#\calTh))$.
However, it is possible to formulate an algorithm with complexity $\calO((\#\calB_\a)\diam(\calR))$, provided that the number of triangles crossing a bond $b$ can be estimated by $C|b|$, where $C$ depends only on the shape regularity of $\calTh$ (refer to \cite[Lemma 5.7]{OrtnerShapeev:qce_analysis:preprint} for a related two-dimensional result).
Additional optimization can be done by taking into consideration the fact that only bonds near the interface can change $\Omega_{T,r}$.
\end{remark}

We expect that the precomputation time will not overly dominate the main computation time in most of applications.
Indeed, the factor $\calO\big(\log(\diam(\Omega))+\log(\diam(\calR)))$ is essentially the maximal number of iterations of Euclid's algorithm and is between $15$ and $50$ for a typical atomistic system with $\diam(\calR)\approx 5$ and $10^2\lessapprox\diam(\Omega)\lessapprox10^9$.
In the numerical tests conducted in this paper, computing the bond volumes (step 1(a)) was several times faster than doing the interface correction (step 1(b)).

\section{Computing $\len{T,r}$}\label{sec:computing_Len}

This section is devoted entirely to an algorithm of fast computation of $\len{T,r}$ defined by \eqref{eq:len} assuming that the vertices of a polyhedron $T$ belong to a lattice $\bbZ^3$ (however, many ingredients of the algorithm, in particular the entire Section \ref{sec:computing_Len:reductions}, remain true for any $T$).
A reader who is not interested in details or justification of the algorithm can skip this section or refer to Appendix \ref{sec:matlab} for a Matlab code.

\begin{table}
  \begin{center}
  \begin{tabular}{|c|c|} \hline
    Reduction to: & \#(parameters) \\ \hline
    $\len{T,r}$ & 15 \\
    $\gcd(r_1,r_2,r_3)=1$ & 15 \\
    $r=e_3$ & 12 \\
    truncated prism & 9 \\
    triangle & 6 \\
    trapezoid & 4 \\
    right triangle & 2 \\
    $S_{a,b}$ & 2 \\ \hline
  \end{tabular}
  \caption{\label{tbl:algorithm} 
  	List of reductions from the original problem of computing $\len{T,r}$ to computing $S_{a,b}$, with the number of parameters left after the reduction.
  }
  \end{center}
\end{table}

The algorithm is based on a series of steps, presented in Sections \ref{sec:computing_Len:reductions}--\ref{sec:computing_Len:Sab}, that reduce the original problem of computing $\len{T,r}$ with 15 scalar parameters (12 to define $T$ and 3 to define $r$) to an integer sum $S_{a,b}$ (cf.\ \eqref{eq:Sab}) with only 2 parameters $a,b\in\bbZ$.
The principal steps of the algorithm are summarized in Table \ref{tbl:algorithm}.
The overall complexity of the algorithm is discussed in Section \ref{sec:computing_Len:complexity}.

In this section, by $A,B,C,D,X,Y,Z$ we will denote the points in $\bbR^3$, and by $r, s, x, \xi$ we will denote vectors in $\bbR^3$.
The points may be identified with their radius vectors.
For two points $X,Y\in\bbR^3$, we denote $\overrightarrow{XY} = Y-X$.
For points and vectors, the subscripts $1$, $2$, and $3$ will denote their coordinates, and we will use the notation $X=(X_1, X_2, X_3)$.
The standard basis of $\bbR^3$ will be denoted by $e_1,e_2,e_3$.
The points on the $xy$-plane in $\bbR^3$ (i.e., the plane $\{X\in\bbR^3:\,X_3=0\}$) will be identified with the respective points in $\bbR^2$.

\subsection{Reductions}\label{sec:computing_Len:reductions}

We start with a tetrahedron $T$ and a vector $r=(r_1,r_2,r_3)\in\bbZ^3$, $r\ne 0$.

\subsubsection{Reduction to the case $\gcd(r_1, r_2, r_3)=1$}\label{sec:computing_Len:reductions:gcd}

In this paragraph we show that
\begin{equation}
\label{eq:reduction1:claim}
\len{T,r} = \len{T,r/\gcd(r_1, r_2, r_3)},
\end{equation}
where $\gcd(r_1, r_2, r_3)$ is the greatest common divisor of $r_1, r_2, r_3\in\bbZ$.

Indeed, let $r=n s$ with $n=\gcd(r_1,r_2,r_3)\in\bbN$ and $s\in\bbZ^3$.
Fix $x\in\bbZ^3$, and consider a collection of points
\begin{equation}
\label{eq:calX}
\calX_x = \{x+is \,:~ i\in\bbZ\}
.
\end{equation}
First, compute the contribution of a collection of the respective collection of bonds $\{(\xi,\xi+s) \,:~ \xi\in \calX_x\}$ to $\len{T,s}$:
\begin{equation}
\label{eq:reduction1:calc1}
\sum_{\xi\in\calX_x} \mint_{\xi}^{\xi+s} \cchi_T \db
=
\sum_{i\in\bbZ} \mint_{x+is}^{x+is+s} \cchi_T \db
=
\sum_{i\in\bbZ} \int_0^1 \cchi_T(x+(i+\lambda)s) \dd\lambda
=
\int_{-\infty}^{\infty} \cchi_T(x+\lambda s) \dd\lambda.
\end{equation}
Second, compute the contributions of
\begin{align*}
\{(\xi,\xi+r) \,:~ \xi\in \calX_x\}
=~&
\{(x+is,x+is+r) \,:~ i\in\bbZ\}
\\ =~&
\{(x+js+ir,x+js+ir+r) \,:~ i\in\bbZ,~j=0,1,\ldots,n-1\}
\end{align*}
to $\len{T,r}$:
\begin{equation}
\label{eq:reduction1:calc2}
\begin{split}
\sum_{\xi\in\calX_x} \mint_{\xi}^{\xi+r} \cchi_T \db
=~&
\sum_{j=0}^{n-1}
\sum_{i\in\bbZ} \mint_{x+js+ir}^{x+js+ir+r} \cchi_T \db
\\ =~&
\sum_{j=0}^{n-1}
\int_{-\infty}^{\infty} \cchi_T(x+js+\lambda r) \dd\lambda
\\ =~&
\frac1n \sum_{j=0}^{n-1}
\int_{-\infty}^{\infty} \cchi_T(x+\mu s) \dd\mu
\\ =~&
\int_{-\infty}^{\infty} \cchi_T(x+\mu s) \dd\mu,
\end{split}
\end{equation}
where we did the change of variable $\mu=j+\lambda n$.

From calculations \eqref{eq:reduction1:calc1} and \eqref{eq:reduction1:calc2} it is easy to see that $\len{T,s}=\len{T,r}$.
Indeed, summing the contributions of different $\calX_x$ yields
\begin{align}
\label{eq:len-through-calX}
\len{T,s}
=~&
\sum_{\calX_x} \sum_{\xi\in\calX_x} \mint_{\xi}^{\xi+s} \cchi_T \db
\\ =~& \notag
\sum_{\calX_x} \sum_{\xi\in\calX_x} \mint_{\xi}^{\xi+r} \cchi_T \db
=
\len{T,r},
\end{align}
which proves \eqref{eq:reduction1:claim}.

\subsubsection{Reduction to the case $r=e_3$}\label{sec:computing_Len:reductions:r_e3}

We now assume $\gcd(r_1, r_2, r_3)=1$.
In this subsection we first find a suitable linear transformation $\mM$ such that $\mM r=e_3$, and apply it to both $T$ and $r$.
We then extend the definitions of $\len{\omega, r}$ and $\chi_\omega$ to allow measuring angles of edges and vertices in the untransformed space.

\medskip{\bf Construction of $\mM$}. Due to B\'ezout's lemma, there exist $c_{12}, d_{12},c_3, d_3\in\bbZ$ such that
\[
r_1 c_{12} + r_2 d_{12} = \gcd(r_1, r_2),
\qquad
\gcd(r_1, r_2) c_3 + r_3 d_3 = \gcd(\gcd(r_1, r_2), r_3) = 1.
\]
Take the matrix $\mM\in\bbZ^{3\times3}$ as the product of two matrices,
\[
\mM =
\begin{pmatrix}
r_3 & 0 & -\gcd(r_1, r_2) \\
0 & 1 & 0 \\
c_3 & 0 & d_3
\end{pmatrix}
\,
\begin{pmatrix}
c_{12} & d_{12} & 0 \\
-\frac{r_2}{\gcd(r_1, r_2)} & \frac{r_1}{\gcd(r_1, r_2)} & 0 \\
0 & 0 & 1
\end{pmatrix}
,
\]
and compute $\mM r$:
\[
\begin{pmatrix}
c_{12} & d_{12} & 0 \\
-\frac{r_2}{\gcd(r_1, r_2)} & \frac{r_1}{\gcd(r_1, r_2)} & 0 \\
0 & 0 & 1
\end{pmatrix}
\,
\begin{pmatrix}
r_1 \\ r_2 \\ r_3
\end{pmatrix}
=
\begin{pmatrix}
\gcd(r_1, r_2) \\ 0 \\ r_3
\end{pmatrix},
\]
\[
\mM r
=
\begin{pmatrix}
r_3 & 0 & -\gcd(r_1, r_2) \\
0 & 1 & 0 \\
c_3 & 0 & d_3
\end{pmatrix}
\,
\begin{pmatrix}
\gcd(r_1, r_2) \\ 0 \\ r_3
\end{pmatrix}
=
\begin{pmatrix}
0 \\ 0 \\ 1
\end{pmatrix}
=e_3
.
\]

It is also important to notice that both $\mM$ and $\mM^{-1}$ have integer coefficients; the latter is due to $\det\mM=1$.
Hence
\begin{equation} \label{eq:MZ3}
\mM \bbZ^3=\bbZ^3
.
\end{equation}

{\bf Extension of $\chi_\omega$ and $\len{\omega, r}$}. 
We need to apply the transformation $\mM$ to both $T$ and $r$.
To that end, we extend the definition of $\chi_\omega$ by allowing for measuring angles of edges and vertices of $\omega$ after applying $\mM$:
\[
\chi^\mM_{\omega}(x) := \cchi_{\mM^{-1}\omega}(\mM^{-1} x),
\]
so that $\chi_{\omega}(x) = \chi^\mM_{\mM \omega}(\mM x)$.
In the case when $\omega$ is a polyhedron, $\chi^\mM_{\omega}$ can be evaluated as
\[
\chi^\mM_{\omega}(x) =
\begin{cases}
1           &\textnormal{if $x\in$ interior of $\omega$},
\\
\frac{1}{2} &\textnormal{if $x\in$ face of $\omega$},
\\
\frac{\alpha}{2\pi} &\textnormal{if $x\in e$, $e$ is an edge of $\omega$, $\alpha$ is the angle of $\mM^{-1} e$ in $\mM^{-1}\omega$},
\\[0.2em]
\frac{\beta}{4\pi} &\textnormal{if $x$ is a vertex of $\omega$, $\beta$ is the spherical angle of $\mM^{-1} x$ in $\mM^{-1}\omega$},
\\
0 & \textnormal{otherwise}.
\end{cases}
\]
Likewise, extend
\begin{equation}
\label{eq:lenM}
\len{\mM, \omega, r} := \len{\mM^{-1} \omega, \mM^{-1} r}
= \sum_{x\in\mM\bbZ^3} \mint_{x}^{x+r} \chi^\mM_\omega \db
= \sum_{x\in\bbZ^3} \mint_{x}^{x+r} \chi^\mM_\omega \db
\end{equation}
so that $\len{\omega,r} = \len{\mM, \mM\omega, \mM r}$.
Note that in the last step of \eqref{eq:lenM} we used \eqref{eq:MZ3}.
It is worthwhile noting that $\len{\mM, \omega, r}$ is equal to $\len{\omega, r}$ unless $r$ is parallel to some edge of $\omega$.

Thus, we reduced
\[
\len{T,r} = \len{\mM, \mM T,\mM r} = \len{\mM, \mM T, e_3}.
\]

\subsubsection{Reduction to Truncated Prism}\label{sec:computing_Len:reductions:prism}

Denote the vertices of $T$ by $A,B,C,D\in\bbZ^3$, and choose their ordering to have a positive orientation in 3D, i.e., so that the vectors $\overrightarrow{AB}$, $\overrightarrow{AC}$, and $\overrightarrow{AD}$ form a positively orientated basis.

The tetrahedron $T$ can be represented as an oriented sum of truncated prisms, which can be rigorously expressed with characteristic functions:
\begin{equation}\label{eq:tet_split}
\cchi_{T} =
- o(B'C'D') \cchi_{\P(BCD)}
+ o(A'C'D')\cchi_{\P(ACD)}
- o(A'B'D')\cchi_{\P(ABD)}
+ o(A'B'C')\cchi_{\P(ABC)},
\end{equation}
where by $\bullet'$ we denote a projection of a point or a vector on the $xy$-plane (i.e., on the plane orthogonal to $e_3$), $\P(XYZ)$ is a truncated prism with vertices $X$, $Y$, and $Z$ and their projections $X'$, $Y'$, $Z'$ (see Figure \ref{fig:prism} for an illustration), and $o(XYZ)\in\{-1,0,1\}$ is an orientation of three points $X,Y,Z\in\bbZ^2$ defined to be zero if $X,Y,Z$ lie on the same line and to be equal to the orientation of the basis $\overrightarrow{XY}$, $\overrightarrow{XZ}$ otherwise.

\begin{figure}
\begin{center}
\includegraphics{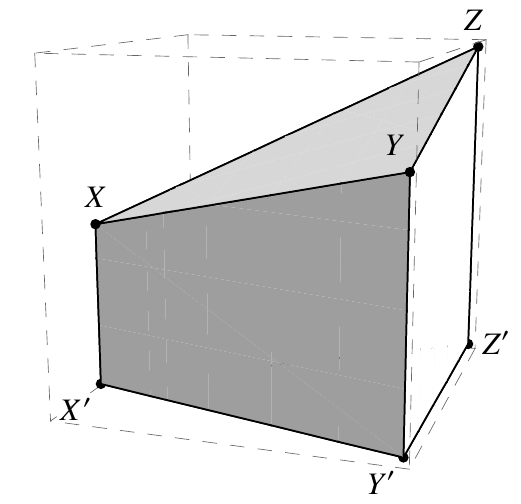}
\end{center}
\caption{A truncated prism $\P(XYZ)$ formed by three vertices $X$, $Y$, $Z$ and their projections $X'$, $Y'$ and $Z'$ on the $xy$-plane.
}
\label{fig:prism}
\end{figure}
The lower-dimensional version of \eqref{eq:tet_split} (i.e., splitting of a triangle into trapezia) is illustrated in Figure \ref{fig:triangle_splitting}, and the proof for an arbitrary dimension is given in Appendix \ref{sec:triangle_splitting_proof}.

For convenience, we assume that all four points $A$, $B$, $C$, and $D$ lie above the $xy$-plane (otherwise some prisms may be ill-defined).
Obviously, one can always shift $T$ upwards to satisfy this requirement.
This, however, is not required with an appropriate generalization of $\cchi_{\P(XYZ)}$ when $T$ is not entirely above the $xy$-plane; refer to Appendix \ref{sec:triangle_splitting_proof} for more details.

Thus, we reduced computing $\len{T, r}$ for a tetrahedron $T$ to computing
\begin{align*}
\len{\mM, T, r} =
   -~& o(B'C'D') \len{\mM, \P(BCD), r}
\\ +~& o(A'C'D') \len{\mM, \P(ACD), r}
\\ -~& o(A'B'D') \len{\mM, \P(ABD), r}
\\ +~& o(A'B'C') \len{\mM, \P(ABC), r}.
\end{align*}

\subsubsection{Reduction to Sums over Triangles}\label{sec:computing_Len:reductions:triangle}

We have that $\mM\in\bbZ^{3\times3}$, $\det\mM=1$, $r=e_3$, $P=\P(ABC)$ is a truncated prism, the points $A,B,C\in\bbZ^3$ are above the $xy$-plane, and we need to compute $\len{\mM,P,r}$.
In what follows we will use the notation $\triangle(XYZ)$ for a triangle with three vertices $X,Y,Z\in\bbR^2$.

We can assume that the plane $ABC$ is not parallel to the $z$ axis: otherwise $\P(ABC)$ is degenerate and $\len{\mM,P,r}=0$ (since then $\chi^\mM_P\equiv 0$).
Hence, let $z=c_1 x + c_2 y + c_3$ be an equation of the plane $ABC$.

We will split all the bonds, as we did in \eqref{eq:len-through-calX}, into the classes $(\xi,\xi+r)$, $\xi\in\calX_x$ (\cf \eqref{eq:calX}), with each class defined by $x=i e_1+j e_2$, $i,j\in\bbZ$.
That is, we express
\[
\len{P,r} =
\sum_{i,j\in\bbZ}
\sum_{\xi\in\calX_{(i,j,0)}}
\mint_{\xi}^{\xi+r} \chi^\mM_P \db
=
\sum_{i,j\in\bbZ}
\int_{-\infty}^\infty \chi^\mM_P(i,j,z) \dd z
=
\sum_{i,j\in\bbZ}
\int_0^{c_1 i+c_2 j+c_3} \chi^\mM_P(i,j,z) \dd z
.
\]
For $x_3$ between $0$ and $c_1 i+c_2 j+c_3$, we have $\chi^\mM_P(i,j,z)=1$ if $(i,j)$ is in the interior of $\triangle=\triangle(A'B'C')$, $\chi^\mM_P(i,j,z)=\smfrac12$ if $(i,j)$ is on the edge of $\triangle$, and $\chi^\mM_P(i,j,z)=\smfrac\alpha{2\pi}$ if $(i,j)$ is on the vertex of $\triangle$.
The value $\alpha$ is determined by the edges $v$ and $w$ sharing the respective vertex: $\alpha$ is equal to the angle between the plane formed by $\mM^{-1}v$ and $\mM^{-1}e_3$ and the plane formed by $\mM^{-1}w$ and $\mM^{-1}e_3$.
The latter is equal to the angle between $\mM^{-1}v\times \mM^{-1}e_3$ and $\mM^{-1}w\times \mM^{-1}e_3$.

Thus, if we define, for a polygon $S\subset\bbR^2$, its characteristic function
\[
\tilde\chi^{\mM}_S(i,j) =
\begin{cases}
1                   & \text{ if $(i,j)\in$ interior of S,} \\
\smfrac12           & \text{ if $(i,j)\in$ edge of S,} \\
\smfrac\alpha{2\pi} & \text{ if $(i,j)\in$ vertex of S sharing edges $v,w$, where}
                 \\ & \text{ ~~$\alpha$ is the angle between $\mM^{-1}v\times \mM^{-1}e_3$ and $\mM^{-1}w\times \mM^{-1}e_3$},
\end{cases}
\]
then 
\[
\int_0^{c_1 i+c_2 j+c_3} \chi^\mM_P(i,j,z) \dd z
=
(c_1 i+c_2 j+c_3) \tilde\chi^{\mM}_{\triangle(A'B'C')}(i,j),
\]
and hence
\begin{align}\notag
o(A'B'C')\len{P,r} =~&
o(A'B'C')\sum_{i,j\in\bbZ}
(c_1 i+c_2 j+c_3) \tilde\chi^{\mM}_{\triangle(A'B'C')}(i,j)
\\ =:~& \label{eq:Sprime}
S'_{A'B'C'}(\mM; c_1,c_2,c_3)
.
\end{align}
We thus reduced the problem to computing the sum $S'_{A'B'C'}(\mM; c_1,c_2,c_3)$ over a triangle $\triangle(A'B'C')$.

\subsubsection{Reduction to a Sum over Right Triangles}\label{sec:computing_Len:reductions:right_triangle}

Let $\triangle=\triangle(ABC)$, and let $A'$, $B'$, and $C'$ be the projections of $A, B, C \in\bbZ^2$ on the $x$-axis (i.e., on $e_1$), as illustrated in the Figure \ref{fig:triangle_splitting}.

\begin{figure}
\begin{center}
\hfill
\subfigure[]{\label{fig:triangle_splitting}\includegraphics{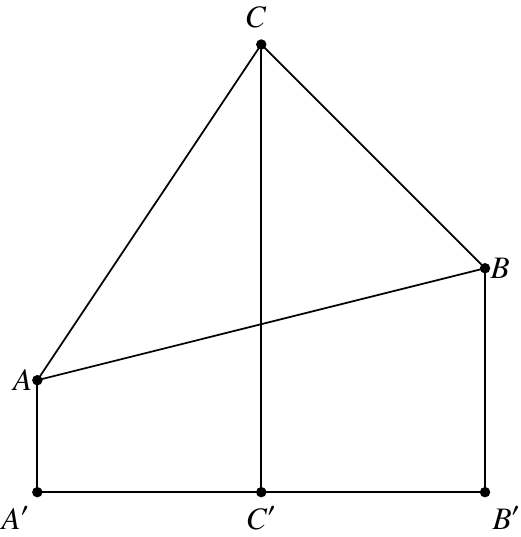}}
\hfill\hfill
\subfigure[]{\label{fig:trapezoid_splitting}\includegraphics{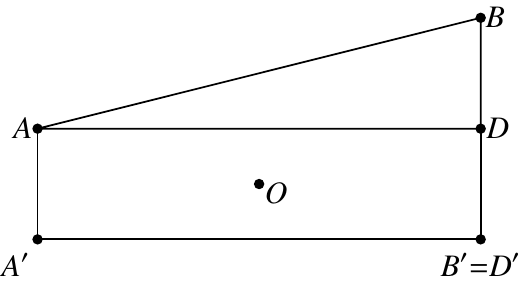}}
\hfill$\mathstrut$
\end{center}
\caption{Left: a triangle $\triangle(ABC)$ and the projections of its vertices on the $x$-axis. The triangle can thus be represented as an oriented sum of three trapezia, $\Trap(AB)$, $\Trap(BC)$, $\Trap(CA)$. One can notice that the area under $\triangle(ABC)$ is counted once with minus (for $\Trap(AB)$) and then with plus (for $\Trap(BC)$ and $\Trap(CA)$).	
	Right: splitting of a trapezoid $\Trap(AB)$ into a right triangle $\triangle(ABD)$ and a rectangle $\Trap(AD)$. 
}
\label{fig:triangle_and_trapezoid_splitting}
\end{figure}

Then 
\begin{equation}\label{eq:triangle_splitting}
o(ABC) \tilde\chi^{\mM}_{\triangle(ABC)}
=
-
o(BC) \tilde\chi^{\mM}_{\Trap(BC)}
+
o(AC) \tilde\chi^{\mM}_{\Trap(AC)}
-
o(AB) \tilde\chi^{\mM}_{\Trap(AB)}
,
\end{equation}
where $\Trap(XY)$ is a trapezoid with vertices $X$, $Y$, $X'$, and $Y'$, by $\bullet'$ we denote a projection on the $x$-axis (i.e., on $e_1$), and $o(XY)$ is the orientation of the two points $X$ and $Y$ on the $x$-axis defined as $\sgn(\overrightarrow{XY}\cdot e_1)$.

The formula \eqref{eq:triangle_splitting} is quite intuitive: indeed, as seen in Figure \ref{fig:triangle_splitting}, the area under the triangle will be counted with the minus sign when evaluating $-o(AB) \tilde\chi^{\mM}_{\Trap(AB)}$ and with the plus sign when evaluating $-o(BC) \tilde\chi^{\mM}_{\Trap(BC)} + o(AC) \tilde\chi^{\mM}_{\Trap(AC)}$.
The proof of \eqref{eq:triangle_splitting} is given in Appendix \ref{sec:triangle_splitting_proof}.

A trapezoid $\Trap(AB)$ can further be split into a right triangle and a rectangle (see an illustration in Figure \ref{fig:trapezoid_splitting}):
\[
o(AB) \tilde\chi^{\mM}_{\Trap(AB)} = o(ABD) \tilde\chi^{\mM}_{\triangle(ABD)} + o(AD)\tilde\chi^{\mM}_{\Trap(AD)}
,
\]
where $D := (B_1,A_2)$.
(Here indices $1$ and $2$ refer to the $x$- and $y$-coordinates of a point in $\bbR^2$.)

Thus, we reduced our problem to two problems: (1) computing $S'_{ABC}(\mM; c_1,c_2,c_3)$, where $AB$ and $BC$ are parallel to $x$ and $y$ axes respectively, and (2) computing
\[
\sum_{i,j\in\bbZ} o(AD) \tilde\chi^{\mM}_{\Trap(AD)}(i,j) (c_1 i+c_2 j+c_3)
,
\]
where $AD$ is parallel to the $x$-axis.
The latter can be computed analytically using the fact that the function $o(AD) \tilde\chi^{\mM}_{\Trap(AD)}(i,j)$ is symmetric with respect to rotation by the arc length $\pi$ around the center $O$ of the rectangle $\Trap(AD)$:
\begin{equation}\label{eq:sum_over_a_rectangle_answer}
\sum_{i,j\in\bbZ} o(AD) \tilde\chi^{\mM}_{\Trap(AD)}(i,j) (c_1 i+c_2 j+c_3)
=
(D_1-A_1) A_2\,(c_1 i+c_2 j+c_3)\big|_{(i,j)=O}
.
\end{equation}

\subsection{Computing the Sum over a Right Triangle}

It remains to develop an algorithm for computing $S'_{ABC}(\mM; c_1,c_2,c_3)$ for $AB$ and $BC$ parallel to the $x$- and the $y$-axis respectively, where $S'$ is defined by \eqref{eq:Sprime}.

\begin{figure}
\begin{center}
\hfill
\subfigure[]{\label{fig:right_triangle}\includegraphics{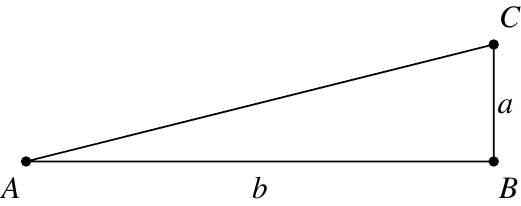}}
\hfill\hfill
\subfigure[]{\label{fig:right_triangle_shifted}\includegraphics{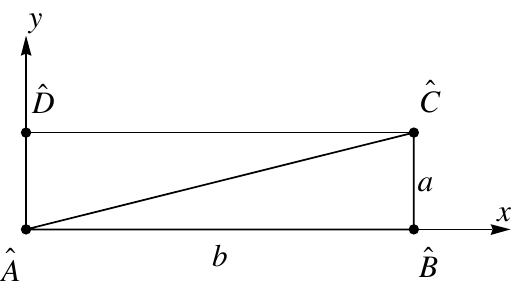}}
\hfill$\mathstrut$
\end{center}
\caption{A right triangle $\triangle(ABC)$ (left) and its shifted copy $\triangle(\hA\hB\hC)$ together with its rotated copy $\triangle(\hC\hD\hA)$ (right).
}
\end{figure}

Let $A=(A_1, A_2)$, $B=A+b e_1$, $C=B+a e_2$ (see Figure \ref{fig:right_triangle}).
We assume that both $a\in\bbZ$ and $b\in\bbZ$ are nonzero (otherwise $\triangle(ABC)$ is degenerate, and $S'_{ABC}(\mM; c_1,c_2,c_3)$ is zero).

We shift the points $A$, $B$, and $C$ so that $A$ coincides with the origin (see Figure \ref{fig:right_triangle_shifted}).
That is, we introduce the points $\hA = (0,0)$, $\hB = (b,0)$, $\hC = (b,a)$, and change the variables of summation $i\to i-A_1$, $j\to j-A_2$:
\begin{align*}
S'_{ABC}(\mM; c_1,c_2,c_3)
=~&
o(ABC)\sum_{i,j\in\bbZ} \tilde\chi^{\mM}_{\triangle(ABC)}(i+A_1,j+A_2)\,(c_1 i+c_2 j+c_4)
\\ =:~&
S'_{\hA\hB\hC}(\mM; c_1,c_2,c_4)
,
\end{align*}
where $c_4 = c_3 + c_1 A_1 + c_2 A_2$.

Second, notice that since $\triangle(\hA\hB\hC)$ and its copy rotated by $\pi$, $\triangle(\hC\hD\hA)$ (see an illustration in Figure \ref{fig:right_triangle_shifted}), together compose a rectangle, we can use \eqref{eq:sum_over_a_rectangle_answer} and express
\[
S'_{\hA\hB\hC}(\mM; 0,0,c_4)
= \smfrac12 (S'_{\hA\hB\hC}(\mM; 0,0,c_4) +S'_{\hC\hD\hA}(\mM; 0,0,c_4))
= \smfrac12 a b\, c_4.
\]
Thus, it remains to compute $S'_{\hA\hB\hC}(\mM; c_1,c_2,0)$.

Third, notice that we can reduce it to the case $a,b>0$ by doing reflections with respect to the axes (e.g., reflection around the $x$-axis corresponds to changing $a\to-a$, $c_1\to-c_1$, $o(\hA\hB\hC)\to-o(\hA\hB\hC)$).
Hence we assume that $a,b>0$ and therefore $o(ABC)=o(\hA\hB\hC)=1$.

Last, note that the function $\tilde\chi^{\mM}_{\triangle(\hA\hB\hC)}(i,j)$ can be described as follows:
\[
\tilde\chi^{\mM}_{\triangle(\hA\hB\hC)}(i,j)
=
\begin{cases}
1         & 0<i<b,~0<j<\smfrac ab i, \\
\smfrac12 & 0<i<b,~j=0, \\
\smfrac12 & 0<i<b,~j=\smfrac ab i, \\
\smfrac12 & i=b,~0<j<a, \\
\smfrac\alpha{2\pi} & i=0,~j=0, \\[0.2em]
\smfrac\beta{2\pi}  & i=b,~j=0, \\
\smfrac\gamma{2\pi} & i=b,~j=a, \\
0 & \text{otherwise,}
\end{cases}
\]
with
\begin{align*}
\alpha =~& \ang(\mM^{-1}(b e_1)\times \mM^{-1}e_3 \,,~ \mM^{-1}(b e_1+a e_2)\times \mM^{-1}e_3), \\
\beta  =~& \ang(\mM^{-1}(b e_1)\times \mM^{-1}e_3 \,,~ \mM^{-1}(a e_2)\times \mM^{-1}e_3), \\
\gamma =~& \ang(\mM^{-1}(a e_2)\times \mM^{-1}e_3 \,,~ \mM^{-1}(b e_1+a e_2)\times \mM^{-1}e_3)
,
\end{align*}
where $\ang(v,w):=\arccos(v\cdot w)$ for $u,v\in\bbR^3$.
	
Thus,
\begin{align*}
S'_{\hA\hB\hC}(\mM; c_1,c_2,0)
=~&
\sum_{i=1}^{b-1}
\sum_{j=1}^{\lfloor \frac{ai}b \rfloor} (c_1 i+c_2 j)
\\ ~&-
\smfrac12
\sum_{\substack{0<i<b \\ i\in\gcd(a,b)\bbZ}}
(c_1 i+c_2 \smfrac{ai}b)
+
\smfrac12 \sum_{j=1}^a (c_1 b+c_2 j)
+
\smfrac12 \sum_{i=1}^{b-1} (c_1 i+c_2 0)
\\ ~&+
\sum_{(i,j)\in\{\hA,\hB,\hC\}} \tilde\chi^{\mM}_{\triangle(\hA\hB\hC)}(i,j)
.
\end{align*}
Each of the terms in the second line is a sum of an arithmetic progression and can be expressed analytically.

Thus, it remains to compute
\[
\sum_{i=1}^{b-1}
\sum_{j=1}^{\lfloor \frac{ai}b \rfloor} (c_1 i+c_2 j)
=
\sum_{i=0}^{b-1}
\sum_{j=1}^{\lfloor \frac{ai}b \rfloor} (c_1 i+c_2 j).
\]
In what follows we will use the following two standard identities that can be easily proved by induction:
\begin{equation} \label{eq:sum_i}
\sum_{i=0}^{n-1} i = \frac{n (n-1)}2
,
\qquad
\sum_{i=0}^{n-1} i^2 = \frac{n (n-1) (2n-1)}6
.
\end{equation}
Using the second identity we can transform
\[
\sum_{i=0}^{b-1}
\sum_{j=1}^{\lfloor \frac{ai}b \rfloor} i
=
\sum_{i=0}^{b-1}
i\,\Big\lfloor \frac{ai}b \Big\rfloor
=
\sum_{i=0}^{b-1} i \Big(\frac{a i}{b} - \frac{1}{b} (a i \mod b) \Big)
= \frac{1}{6} (b-1) a (2 b-1) - S_{a,b},
\]
where
\begin{equation} \label{eq:Sab}
S_{a,b} := \sum_{i=0}^{b-1} \frac ib\,(a i \mod b)
,
\end{equation}
and
\begin{align*}
\sum_{i=0}^{b-1}
\sum_{j=1}^{\lfloor \frac{ai}b \rfloor} j
=~&
\sum_{i=0}^{b-1}
\frac12 \Big\lfloor \frac{ai}b \Big\rfloor \Big( \Big\lfloor \frac{ai}b \Big\rfloor+1 \Big)
\\ =~&
\frac12
\sum_{i=0}^{b-1}
\Big(\frac{a i}{b} - \frac{1}{b} (a i \mod b) \Big) \Big(1+\frac{a i}{b} - \frac{1}{b} (a i \mod b) \Big)
\\ =~&
\frac12
\sum_{i=0}^{b-1}
\frac{a i}{b} \Big(1+\frac{a i}{b}\Big)
-
\frac1{2b}
\sum_{i=0}^{b-1}
(a i \mod b)
-
\frac1b
\sum_{i=0}^{b-1}
\frac{a i}{b} (a i \mod b)
+
\frac1{2b^2}
\sum_{i=0}^{b-1}
(a i \mod b)^2
.
\end{align*}
The first, second, and fourth sums can be computed analytically as
\begin{align} \label{eq:sum_const_plus_i}
\sum_{i=0}^{b-1}
\frac{a i}{b} \Big(1+\frac{a i}{b}\Big)
=~&
\frac{a (b-1) (3b - a + 2ab)}{6 b},
\\ \label{eq:sum_ai_mod_b}
\sum_{i=0}^{b-1}
(a i \mod b)
=~&
\gcd(a,b)
\sum_{i=0}^{\smfrac{b}{\gcd(a,b)}-1}
\gcd(a,b)\,i
=
\frac{b (b-\gcd(a,b))}2,
\\ \label{eq:sum_ai_mod_b_square}
\sum_{i=0}^{b-1}
(a i \mod b)^2
=~&
\gcd(a,b)
\sum_{i=0}^{\smfrac{b}{\gcd(a,b)}-1}
(\gcd(a,b)\,i)^2
=
\frac{b (b-\gcd(a,b)) (2b-\gcd(a,b))}6,
\end{align}
and the third sum again reduces to \eqref{eq:Sab}:
\[
\frac1b
\sum_{i=0}^{b-1}
\frac{a i}{b} (a i \mod b)
= \frac ab S_{a,b}.
\]
Here the identity \eqref{eq:sum_const_plus_i} follows from the standard sums \eqref{eq:sum_i}, and to compute \eqref{eq:sum_ai_mod_b} and \eqref{eq:sum_ai_mod_b_square} we use (i) the fact that the numbers $(a i \mod b)$, $i=0,1,\ldots,b-1$, are essentially the numbers $\big\{0,\gcd(a,b),\ldots,\big(\smfrac{b}{\gcd(a,b)}-1\big)\gcd(a,b)\big\}$ repeated $\gcd(a,b)$ times each, and (ii) again the standard sums \eqref{eq:sum_i}.

\subsection{Computing The Sum $S_{a,b}$}\label{sec:computing_Len:Sab}

For computing the sum \eqref{eq:Sab} with $a,b\in\bbZ$ we propose a Euclidean-like algorithm with complexity $\calO(\log(a+b))$, which consists in iteratively reducing the problem with the parameters $(a,b)$ to the problem with the parameters $(b\mod a,a)$.

Using the following identity \cite[p.\thinspace 85]{GrahamKnuthPatashnik2004}:
\[
\lfloor a x \rfloor = \sum_{j=0}^{a-1} \left\lfloor x + \frac{j}{a} \right\rfloor
\quad \forall x\in\bbR
\]
with $x=i/b$ (we assume $b\ne0$, since $b=0$ is a trivial case), express
\[
\frac{1}{b}(a i \mod b)
=
\frac{a i}{b} - \left\lfloor \frac{a i}{b} \right\rfloor
= \frac{a i}{b} - \sum_{j=0}^{a-1} \left\lfloor \frac{i}{b} + \frac{j}{a} \right\rfloor
.
\]
Hence transform
\begin{displaymath}
S_{a,b}
=
\sum_{i=0}^{b-1} \frac{i}{b}\,(a i \mod b)
= \sum_{i=0}^{b-1} i\,\bigg(\frac{a i}{b} - \sum_{j=0}^{a-1} \left\lfloor \frac{i}{b} + \frac{j}{a} \right\rfloor\bigg)
= \frac{a}{b} \sum_{i=0}^{b-1} i^2 - \sum_{i=0}^{b-1} \sum_{j=0}^{a-1} i\left\lfloor \frac{i}{b} + \frac{j}{a} \right\rfloor
=: S_1 - S_2.
\end{displaymath}
The first sum is trivial: $S_1 = \frac{1}{6} (b-1) a (2 b-1)$.

For computing the second sum, notice that $\big\lfloor \frac{i}{b} + \frac{j}{a} \big\rfloor$ equals to $1$ if $\frac{i}{b} + \frac{j}{a}\ge 1$ and $0$ otherwise, and
\[
\frac{i}{b} + \frac{j}{a}\ge 1
~~\Leftrightarrow~~
i \ge b - \frac{b j}{a}
~~\Leftrightarrow~~
i \ge b - \left\lfloor \frac{b j}{a}\right\rfloor
.
\]
Hence
\begin{align*}
S_2
=~&
\sum_{i=0}^{b-1} \sum_{j=0}^{a-1} i\left\lfloor \frac{i}{b} + \frac{j}{a} \right\rfloor
=
\sum_{j=0}^{a-1} \sum_{i=b - \left\lfloor \frac{b j}{a}\right\rfloor}^{b-1}  i
=
\sum_{j=0}^{a-1} \frac{1}{2} \left\lfloor \frac{b j}{a}\right\rfloor \left(2 b-1 - \left\lfloor \frac{b j}{a}\right\rfloor \right)
\\ =~&
\frac{1}{2} \sum_{j=0}^{a-1} \left(\frac{b j}{a} - \frac{1}{a} (b j \mod a)\right) \left(2 b-1 - \frac{b j}{a} +  \frac{1}{a} (b j \mod a) \right)
\\ =~&
\frac{1}{2} \sum_{j=0}^{a-1} \left(
	\frac{b (2 b-1)}{a}\, j
	- \frac{b^2}{a^2}\,j^2
	- \frac{2b-1}{a} (b j \mod a)
	+ \frac{2 b}{a^2}\,j(b j \mod a)
	- \frac{1}{a^2} (b j \mod a)^2
\right).
\end{align*}
We next compute individual sums in $S_2$.

Using \eqref{eq:sum_ai_mod_b} and \eqref{eq:sum_ai_mod_b_square}, evaluate the individual sums in $S_2$:
\begin{align*}
S_2
=~&
\frac{1}{4} (a-1) b (2b-1)
-
\frac{(a-1) (2 a-1) b^2}{12 a}
\\ ~&
+
\frac{(1-2 b)}{2 a} \frac{(a-\gcd(a,b))\,a}{2}
+ \frac{b}{a} S_{b,a}
- \frac{(a-\gcd(a,b))\,(2 a-\gcd(a,b))}{12 a}
.
\end{align*}
Substituting this back to $S_{a,b}$ and collecting the terms yields
\[
S_{a,b}
= 
\frac{3a^2 b +3 a b^2+a^2-3ab +b^2 - 6ab \gcd(a,b) +\gcd(a,b)^2}{12 a} - \frac{b}{a} S_{b,a}
.
\]
Finally, notice that $S_{b,a} = S_{b\mod a, a}$.
We thus manage to reduce the problem from
$(a,b)$ to $(b\mod a,a)$, which yields the $\calO(\log(a+b))$ algorithm.

\subsection{Complexity of the Algorithm}\label{sec:computing_Len:complexity}

\begin{proposition}\label{prop:complexity_of_len}
The number of operations of the algorithm described in Sections \ref{sec:computing_Len:reductions}--\ref{sec:computing_Len:Sab} is at most $\calO\big(\log({\rm diam}(T))+\log(|r|)\big)$.
\end{proposition}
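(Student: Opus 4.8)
The plan is to trace through the chain of reductions summarized in Table \ref{tbl:algorithm} and verify that each individual step costs at most $\calO(\log(\diam(T))+\log|r|)$ arithmetic operations, and that the total number of steps is bounded by a constant (times the depth of a single Euclidean recursion). First I would observe that the reductions of Sections \ref{sec:computing_Len:reductions:gcd} and \ref{sec:computing_Len:reductions:r_e3} each require one $\gcd$/B\'ezout computation on the integer entries of $r$, which by Euclid's algorithm costs $\calO(\log|r|)$ operations; building $\mM$, $\mM^{-1}$, and applying them to the four vertices of $T$ is then a constant number of integer multiplications, each on numbers of magnitude $\calO(\diam(T)\cdot|r|)$, hence still within the claimed budget. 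The splitting of the tetrahedron into four truncated prisms (Section \ref{sec:computing_Len:reductions:prism}), of each prism into a triangle sum (Section \ref{sec:computing_Len:reductions:triangle}), of each triangle into three trapezia and then into right triangles plus rectangles (Section \ref{sec:computing_Len:reductions:right_triangle}) are all \emph{combinatorial} reductions that replace one problem by a bounded number of subproblems — $4$, then $3$, then $3$, then $2$ — with only $\calO(1)$ arithmetic (computing projections, orientations $o(\cdot)$, the plane coefficients $c_1,c_2,c_3$, reflections). So the reduction tree has bounded branching and bounded depth, producing $\calO(1)$ instances of the right-triangle sum $S'_{\hA\hB\hC}(\mM;c_1,c_2,0)$.

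Next I would account for the right-triangle sum itself. The only nontrivial piece there is $S_{a,b}$ in \eqref{eq:Sab}; everything else — the arithmetic-progression sums \eqref{eq:sum_i}, \eqref{eq:sum_const_plus_i}, \eqref{eq:sum_ai_mod_b}, \eqref{eq:sum_ai_mod_b_square}, the rectangle formula \eqref{eq:sum_over_a_rectangle_answer}, and the angle evaluations $\alpha,\beta,\gamma$ via cross products and $\arccos$ — is closed-form and costs $\calO(1)$ operations on integers (or reals) bounded polynomially in $\diam(T)$ and $|r|$. Note $a,b$ here are differences of (transformed) vertex coordinates, so $|a|,|b| = \calO(\diam(T)\cdot|r|)$, whence $\log(a+b) = \calO(\log\diam(T)+\log|r|)$. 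Then I would invoke the recursion established at the end of Section \ref{sec:computing_Len:Sab}: $S_{a,b}$ is reduced to $S_{b\bmod a,\,a}$ with $\calO(1)$ arithmetic per step, and this is exactly the Euclidean recursion on $(a,b)$, so it terminates in $\calO(\log(a+b)) = \calO(\log\diam(T)+\log|r|)$ steps. Multiplying by the $\calO(1)$ right-triangle instances coming out of the reduction tree keeps the total within budget.

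The main obstacle — really the only place the argument is more than bookkeeping — is controlling the \emph{bit-size growth} of the integers through the chain. Each linear map and each splitting can inflate coordinate magnitudes by a bounded factor, but one must check these factors compose to something polynomial (not exponential) in $\diam(T)$ and $|r|$, so that every $\gcd$ and every arithmetic-progression evaluation stays at $\calO(\log\diam(T)+\log|r|)$ operations under the unit-cost (or at worst $\mathrm{polylog}$) model implicitly used. Since the number of reduction layers is a fixed constant, the product of the bounded blow-up factors is itself bounded, so the logarithms merely pick up additive constants; this is what makes the estimate go through. I would therefore structure the proof as: (i) bounded depth and branching of the reduction tree with $\calO(1)$ arithmetic per node and polynomially-bounded integers; (ii) the $S_{a,b}$ recursion is Euclidean, hence $\calO(\log(a+b))$ depth with $\calO(1)$ work per level; (iii) combine, using $\log(a+b)=\calO(\log\diam(T)+\log|r|)$, to conclude.
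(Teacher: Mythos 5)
Your overall route is the same as the paper's: the reduction tree has constant size with $\calO(1)$ arithmetic per node, the only non-constant costs are Euclidean-type recursions (for the B\'ezout coefficients and for $S_{a,b}$), and the final bound follows from $\log(a+b)=\calO(\log\diam(T)+\log|r|)$ once the integers entering $S_{a,b}$ are polynomially bounded in $\diam(T)$ and $|r|$. However, the one step that is more than bookkeeping --- and which you correctly identify as the crux --- is left as an assertion rather than proved. You claim the transformed coordinates have magnitude $\calO(\diam(T)\cdot|r|)$ and elsewhere speak of ``bounded blow-up factors,'' but neither is self-evident: the matrix $\mM$ of Section \ref{sec:computing_Len:reductions:r_e3} contains the B\'ezout coefficients $c_{12},d_{12},c_3,d_3$, and B\'ezout coefficients are not unique; without a normalization they can be taken arbitrarily large, so ``polynomial, not exponential'' is exactly the point that must be checked, not invoked. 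The paper's proof consists essentially of this check: choose $0\leq c_{12}<|r_2|$, $0\leq d_{12}<|r_1|$, $0\leq c_3<|r_3|$, $0\leq d_3<\gcd(|r_1|,|r_2|)$ (as extended Euclid provides), deduce $\|\mM\|_{\infty}\lesssim|r|^2$, hence $\diam(\mM T)\lesssim|r|^2\diam(T)$, and hence $a,b=\calO(|r|^2\diam(T))$ in every sum $S_{a,b}$, after which the Euclidean recursion gives $\calO(\log(|r|^2\diam(T)))=\calO(\log\diam(T)+\log|r|)$.

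The discrepancy in the exponent ($|r|$ in your write-up versus $|r|^2$ in the paper) is harmless because only the logarithm matters, and your accounting of the remaining steps (constant branching and depth, closed-form arithmetic-progression and angle evaluations, the $S_{a,b}\to S_{b\bmod a,\,a}$ recursion of depth $\calO(\log(a+b))$) matches the paper. To make the proof complete you only need to replace the phrase ``bounded factor'' by the explicit norm estimate on $\mM$ under the small-B\'ezout-coefficient choice; as written, the justification of polynomial growth --- which is essentially the entire content of the paper's argument --- is missing.
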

\begin{proof}
Evidently, $|r|$ is not increased as a result of reducing to $\gcd(r_1,r_2,r_3)=1$ (Section \ref{sec:computing_Len:reductions:gcd}).

In reducing to $r=e_3$ (Section \ref{sec:computing_Len:reductions:r_e3}), we can choose $0\leq c_{12}<|r_2|$, $0\leq d_{12}<|r_1|$, $0\leq c_3<|r_3|$, $0\leq d_3<\gcd(|r_1|,|r_2|)$.
Then, up to a constant factor, $\|\mM\|$ can be estimated as
\[
\|\mM\|_{\infty} \lesssim \max(|r_3| + \gcd(|r_1|,|r_2|), 1, c_3+d_3) \max\big(c_{12}+d_{12}, \smfrac{|r_1|+|r_2|}{\gcd(|r_1|,|r_2|)}, 1\big)
\lesssim |r|^2.
\]
And hence $\diam(\mM T) \lesssim r^2 \diam(T)$.

The triangles considered in Section \ref{sec:computing_Len:reductions:triangle} are the bases of the truncated prisms of Section \ref{sec:computing_Len:reductions:prism}, which are projections of faces of $\mM T$, and hence the diameter of each $\triangle$ in Section \ref{sec:computing_Len:reductions:triangle} is at most $\calO(r^2 \diam(T))$.
Similarly one can deduce that both $a,b$ in each sum $S_{a,b}$ are of the order $\calO(r^2 \diam(T))$ and hence the Euclidean-like algorithm of Section \ref{sec:computing_Len:Sab} takes $\calO\big(\log(r^2 \diam(T))\big) = \calO\big(\log(\diam(T))+\log|r|\big)$ operations.
\end{proof}

\section{Numerical Tests}\label{sec:num}

Numerical tests were conducted in order to numerically study the accuracy and stability of the method.

The atoms interact with the Lennard--Jones potential $\phi(z) = -2 |z|^{-6}+|z|^{-12}$ under which the FCC lattice is stable.
The cut-off radius is chosen to be $3.2$.

The lattice vectors are chosen as $a_1=(0,\smfrac1{\sqrt2},\smfrac1{\sqrt2})$, $a_2=(\smfrac1{\sqrt2},0,\smfrac1{\sqrt2})$, $a_3=(\smfrac1{\sqrt2},\smfrac1{\sqrt2},0)$.
The reference lattice $\calL\setminus\calL_\D$ consists of a crystal whose atoms formed a cube with the side $2\sqrt{2} N$ ($N=8,16$).
The Dirichlet-type boundary conditions are imposed by introducing additional atoms with fixed positions, $\calL_\D$.
A single atom at the origin is removed thus forming a vacancy defect.
In total, for $N=16$, the number of (unconstrained) atoms in the atomistic system is $\#(\calL\setminus\calL_\D) = 125022$.

A macroscopic uniform deformation with
\[
\mF = \scriptsize \begin{pmatrix}1&0.01&0.02\\ 0&1&0.015\\ 0&0&1\end{pmatrix}
\]
is applied to the constrained atoms $\calL_\D$.

\begin{figure}
\begin{center}
\includegraphics[width=10cm]{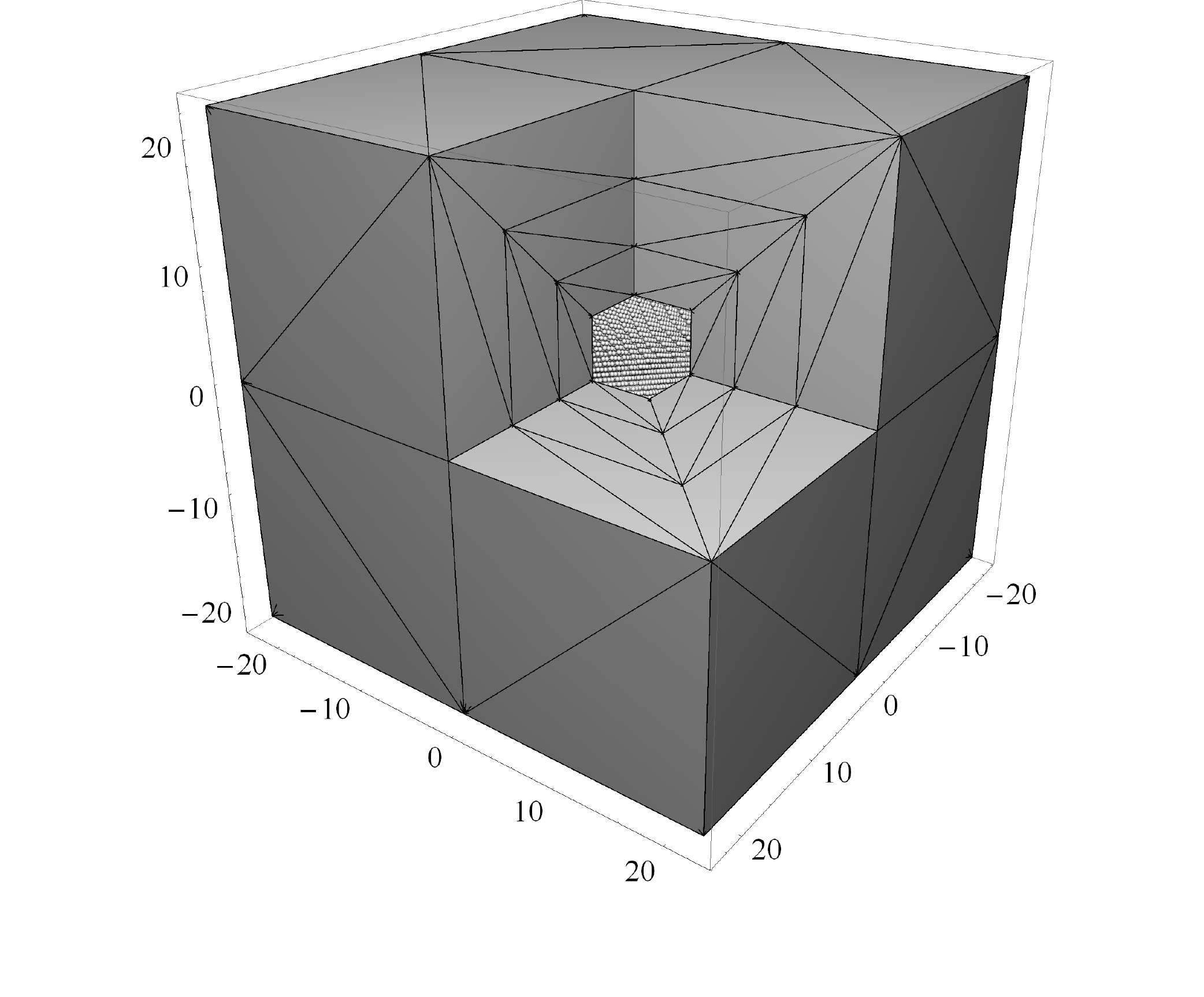}
\end{center}
\caption{Geometry of the A/C coupling: a triangulated continuum region and atoms in the atomistic region for $K=2$ and $N=16$.}
\label{fig:3d-geom}
\end{figure}

The atomistic region $\Omega_\a$ is chosen as a smaller cube with the side $2\sqrt{2} K$, $K=2,3,\ldots,11$ (see an illustration in Figure \ref{fig:3d-geom}).
A quasiradial mesh with mesh size $h=1$ near the A/C interface is chosen in accordance with the optimal choice of meshes in 2D \cite{OrtnerShapeev:qce_analysis:preprint}.
(Note that $h=1$ corresponds to a fully refined mesh near the interface.)

\subsection{Accuracy Test}

In the numerical tests, the exact and the approximate solutions were computed using Newton's method of solving the equilibrium equations, with the initial guess being an undeformed configuration.

The results of the computations are shown in Figure \ref{fig:error}.
The difference in the $\W^{1,\infty}$-seminorm between the approximate and the exact deformation is plotted on the left, and the difference between the energies $|E^h(u^h)-E(u)|$ is plotted on the right.
These errors are plotted against the number of DoF in the system.
One can see that the error in the $\W^{1,\infty}$-seminorm converges with the rate of at least $\calO\big({\rm DoF}^{-1}\big)$ and the error in energy is close to $\calO\big({\rm DoF}^{-5/3}\big)$.

\begin{figure}
\begin{center}
\hfill
\includegraphics{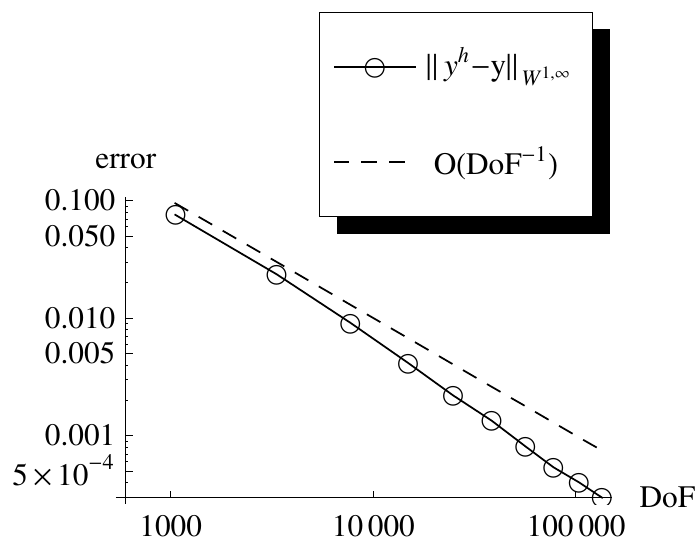}
\hfill\hfill
\includegraphics{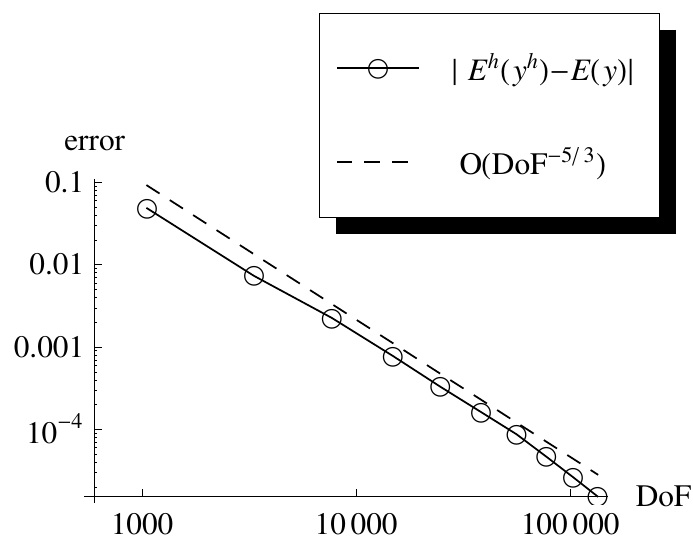}
\hfill$\mathstrut$
\end{center}
\caption{Results of computations. The $\W^{1,\infty}$-error (left) and error in energy (right) are plotted against the number of degrees of freedom DoF.
	The error in the $\W^{1,\infty}$-seminorm converges with the rate of at least $\calO\big({\rm DoF}^{-1}\big)$, and the error in energy is close to $\calO\big({\rm DoF}^{-5/3}\big)$.
}
\label{fig:error}
\end{figure}

\subsection{Stability Tests for a Bravais Lattice}

We also conducted stability tests for a Bravais lattice (i.e., with no defects) to verify that the stability region of the proposed coupling $E^h$ is not smaller than the stability region of the atomistic model $E$.
This is crucial in numerically studying defects: one must ensure that the onset of instability occurs due to motion of a defect but not due to artifacts of the coupling.

We take a Bravais lattice $\calL\setminus\calL_\D$ similar to the one previously described but with no removed atoms.
The macroscopic uniform deformation gradient
\[
\mF =
\begin{pmatrix}1+t&0.05&0.02\\ 0&1+s&0.01\\ 0&0&1\end{pmatrix}
\]
is applied to the constrained atoms $\calL_\D$.
The two parameters, $t$ and $s$, were varied in the range between $-0.2$ and $0.2$.

We computed stability regions for the coupling $E^h$ and compared it to the stability region of the exact atomistic model on an infinite lattice.
A stability region is defined as the set of parameters $(t,s)$ for which the model is stable.
Stability of $E^h$ was determined by numerically testing whether the Hessian $\ddel E^h$ is positive definite.
The stability of the atomistic model was computed analytically with the help of the Fourier transform \cite{HudsonOrtner}.

\begin{figure}
\begin{center}
\hfill
\includegraphics{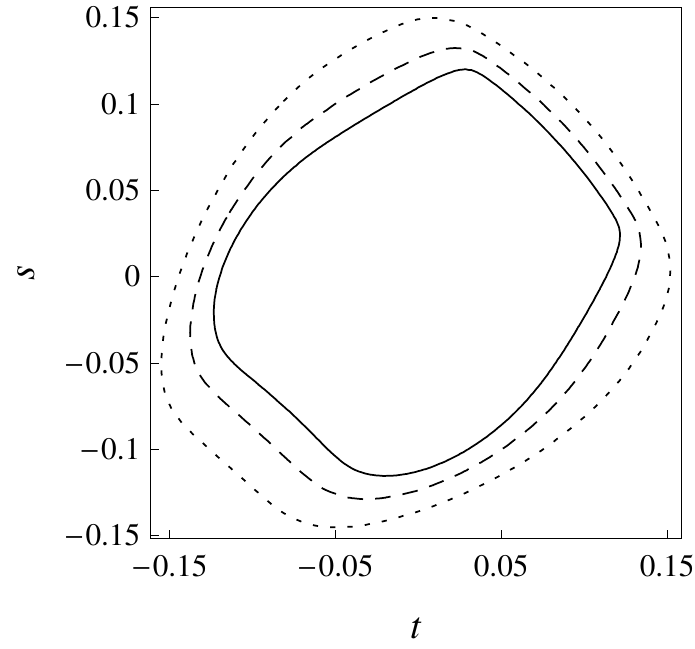}
\hfill$\mathstrut$
\end{center}
\caption{Stability regions for a Bravais lattice.
	The solid line corresponds to the exact stability region of the infinite lattice, the dashed line corresponds to the coupling with $N=16$ and $K=8$, and the dotted line corresponds to the coupling with $N=8$ and $K=4$.
	The results suggest that the coupled A/C system does not lose stability earlier than the original atomistic system.}
\label{fig:stability_regions}
\end{figure}

The stability regions are plotted in Figure \ref{fig:stability_regions}.
The solid line corresponds to the exact stability region of the infinite lattice, the dashed line corresponds to the coupling with $N=16$ and $K=8$, and the dotted line corresponds to the coupling with $N=8$ and $K=4$.
One can see that stability regions of the coupling strictly contain the exact stability region and seem to approach it as $N$ and $K$ increase, which is the desired behavior of the A/C coupling.

\section{Discussion and Conclusion}\label{sec:conclusion}
In the present paper the consistent A/C coupling \cite{Shapeev2011} has been extended to 3D for two-body potentials.
The proposed method couples the atomistic equations with the modified Cauchy--Born continuum model.
The continuum energy of the modified model can be evaluated efficiently, as discussed in Section \ref{sec:computing_Len}.
Although the stability of such a modified continuum model has not been studied analytically in the existing literature, the numerical tests suggest that the proposed coupling is stable whenever the atomistic model is stable.
The numerical tests also confirm convergence of the proposed coupling to the exact solution.

The major challenge yet to be solved is an extension of the present method to many-body interaction.
A one-dimensional consistent coupling for such an interaction exists \cite{LiLuskin:eam.qc}; however, it does not seem obvious how to define continuum approximations to many-body bond energies in many dimensions.

\section*{Acknowledgments}
The author thanks Christoph Ortner for his valuable comments and advice that led to improvement of the manuscript.
The author also appreciated a discussion with Alla Merzakreeva who brought the author's attention to the existing developments in computer science related to integer sums.

\appendix

\section{Splitting a Simplex into Truncated Prisms} \label{sec:triangle_splitting_proof}

\subsection{Auxiliary Definitions}

We define an orientation of points $X^{(1)}$, \ldots, $X^{(d+1)} \in \bbR^d$ as an orientation of the basis $\big(X^{(2)}-X^{(1)},\ldots,X^{(d+1)}-X^{(1)}\big)$; i.e.
\begin{equation}\label{eq:orientation_general_def}
o_{d}(X^{(1)}, \ldots, X^{(d+1)})
:=
\sgn\circ\det
\begin{pmatrix}
X^{(2)}_1 - X^{(1)}_1 & X^{(2)}_2 - X^{(1)}_2 & \ldots & X^{(2)}_{d} - X^{(1)}_{d} \\ 
X^{(3)}_1 - X^{(1)}_1 & X^{(3)}_2 - X^{(1)}_2 & \ldots & X^{(3)}_{d} - X^{(1)}_{d} \\ 
\vdots & \vdots & \ddots & \vdots \\
X^{(d+1)}_1 - X^{(1)}_1 & X^{(d+1)}_2 - X^{(1)}_2 & \ldots & X^{(d+1)}_{d} - X^{(1)}_{d} \\ 
\end{pmatrix}
,
\end{equation}
$o_{d}(X^{(1)}, \ldots, X^{(d+1)}) \in \{-1,0,1\}$.

Next, we define the characteristic function of a truncated prism $P = \P(X^{(1)}, \ldots, X^{(d)})$ for $d$ points $X^{(1)}$, \ldots, $X^{(d)} \in \bbR^d$.
If the plane through these $d$ points is perpendicular to the hyperplane $x_d=0$, then we let $\cchi_P := 0$.
Otherwise, let $x_d = \sum_{i=1}^{d-1} \alpha_i x_i$ be an equation of such a plane and define $\tilde{\cchi}_P\in \L_1(\bbR^d)$ as
\begin{equation} \label{eq:chi-P-tilde}
\tilde{\cchi}_P
:=
\begin{cases}
1, & 0<x_d<\sum_{i=1}^{d-1} \alpha_i x_i \text{ and } \xi\in{\rm conv}((X^{(1)})', \ldots, (X^{(d)})'), \\ 
-1, & 0>x_d>\sum_{i=1}^{d-1} \alpha_i x_i \text{ and } \xi\in{\rm conv}((X^{(1)})', \ldots, (X^{(d)})'), \\ 
0 & \text{otherwise},
\end{cases}
\end{equation}
and
\begin{equation}\label{eq:chi-P}
\cchi_P(x) := \lim_{\rho\to0} \frac1{|B_\rho(x)|} \int_{B_\rho(x)} \tilde{\cchi}_P(\xi)\dd\xi
,
\end{equation}
where by $\bullet'$ we denote an orthogonal projection on the hyperplane $x_d=0$.

Note that $\cchi_P = \tilde{\cchi}_P$ almost everywhere ($\cchi_P$ is defined pointwise in $\bbR^d$ and may take intermediate values conforming with the definition \eqref{eq:chi_def}).
Also note that $\cchi_P$ is a characteristic function of a polytope $P$ in the sense of \eqref{eq:chi_def} only when the face ${\rm conv}(X^{(1)}, \ldots, X^{(d)})$ is ``high enough'' (i.e., has a large enough $x_d$-coordinate).
Otherwise, writing $P = \P(X^{(1)}, \ldots, X^{(d)})$ is formal and does not refer to a proper polytope in $\bbR^d$.

\subsection{Formulation of the Result}

Let $A^{(1)}$, \ldots, $A^{(d+1)} \in \bbR^d$ so that $o_d(A^{(1)}$, \ldots, $A^{(d+1)})=1$.
Denote
\[
\begin{array}{rr@{}l}
\text{the simplex~~}&
T :=~& {\rm conv}(A^{(1)}, \ldots, A^{(d+1)}),
\\
\text{the truncated prisms~~}&
P^{(k)} :=~&
\P(A^{(1)}, \ldots, A^{(k-1)}, A^{(k+1)}, \ldots, A^{(d+1)})
,
\\
\text{their orientations~~}&
o^{(k)} :=~&
o_{d-1}((A^{(1)})', \ldots, (A^{(k-1)})', (A^{(k+1)})', \ldots, (A^{(d+1)})')
,
\\
\text{and the faces of $T$,~~}&
F^{(k)} =~& {\rm conv}(A^{(1)}, \ldots, A^{(k-1)}, A^{(k+1)}, \ldots, A^{(d+1)})
,
\end{array}
\]
where $k=1,\ldots,d+1$.
Here we identify points on the hyperplane $x_d=0$ with points in $\bbR^{d-1}$ when computing orientations $o^{(k)}$ via \eqref{eq:orientation_general_def}

We prove the following proposition.
\begin{proposition}\label{prop:splitting}
\begin{equation}\label{eq:T-through-P}
\cchi_T = -\sum_{k=1}^{d+1} (-1)^k
	o^{(k)}\,
	\cchi_{P^{(k)}}
\end{equation}
pointwise in $\bbR^d$, where $\cchi_T$ is defined by \eqref{eq:chi_def}.
\end{proposition}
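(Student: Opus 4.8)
The plan is to verify the identity \eqref{eq:T-through-P} pointwise by fixing an arbitrary $x\in\bbR^d$, reducing to a computation in the fiber over the projection $\xi=x'$, and then checking that both sides count the same (signed) length of intersections of vertical rays with the simplex. First I would dispose of the degenerate cases: if $o_{d-1}$ of all the projected vertices vanishes (i.e., the projected vertices $(A^{(k)})'$ are affinely dependent), then $T$ projects into a hyperplane of $\bbR^{d-1}$, so $\cchi_T(x)=0$ whenever $\xi$ is not in that hyperplane, and every $\cchi_{P^{(k)}}$ likewise vanishes there; the measure-zero set where $\xi$ lies in the hyperplane does not matter because, as remarked after \eqref{eq:chi_def}, the pointwise values of $\cchi$ are insensitive to lower-dimensional perturbations and both sides are obtained by the same averaging \eqref{eq:chi-P}. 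So I may assume the projected simplex $T'={\rm conv}((A^{(1)})',\ldots,(A^{(d+1)})')$ is nondegenerate in $\bbR^{d-1}$, and by relabeling I may take $o_d(A^{(1)},\ldots,A^{(d+1)})=1$.

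The core step is to work with the almost-everywhere representatives $\tilde\chi$ and to establish $\tilde\chi_T = -\sum_k (-1)^k o^{(k)} \tilde\chi_{P^{(k)}}$ Lebesgue-a.e.\ in $\bbR^d$; the pointwise statement \eqref{eq:T-through-P} then follows by applying the mollification \eqref{eq:chi-P} (equivalently \eqref{eq:chi_def}) to both sides, using linearity of the limit and the fact that all functions involved are bounded and compactly supported, so the ball-average limit commutes with the finite linear combination. To prove the a.e.\ identity I would fix $\xi$ in the interior of $T'$ away from the projections of lower faces (a full-measure condition) and examine the function $t\mapsto \tilde\chi_T(\xi,t)$ of the last coordinate: since $T$ is convex, $\{t:(\xi,t)\in T\}$ is a single interval $[\ell(\xi),u(\xi)]$, whose endpoints lie on two facets $F^{(p)}$, $F^{(q)}$ of $T$ (the ``lower'' and ``upper'' facets over $\xi$). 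On the other hand, each $\tilde\chi_{P^{(k)}}(\xi,\cdot)$ is, by \eqref{eq:chi-P-tilde}, $\pm 1$ on the interval between $0$ and the height of the plane of $F^{(k)}$ over $\xi$ (with sign matching whether that height is positive), provided $\xi$ lies in the projected facet $(F^{(k)})'$, and $0$ otherwise. Writing $h_k(\xi)$ for the height of the affine hull of $F^{(k)}$ over $\xi$, the claim becomes an elementary identity about signed intervals $[0,h_k]$: the alternating sum $-\sum_k(-1)^k o^{(k)} \mathbf{1}_{[0,h_k]}$ (with the $[0,h_k]$ read as signed) telescopes to $\mathbf{1}_{[\ell,u]}$. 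This is exactly the one-dimensional ``inclusion–exclusion for a convex polytope as a signed sum of cylinders over its facets'' and is the heart of the argument; it can be checked by observing that as $\xi$ ranges over a region of $\bbR^{d-1}$, the set of facets whose projection contains $\xi$ is determined by which side of each projected ridge $\xi$ lies on, and the orientations $o^{(k)}$ are precisely the signs that make the contributions of the two facets meeting at the lower (resp.\ upper) boundary survive while all interior contributions cancel in pairs.

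The main obstacle I anticipate is bookkeeping the signs $o^{(k)}$ correctly and relating them to ``lower facet vs.\ upper facet'' in a dimension-independent way: one needs that, for $\xi$ in the interior of $T'$, exactly the two facets realizing $\ell(\xi)$ and $u(\xi)$ contribute $+\mathbf 1_{[\ell,u]}$ after the signs are accounted for, while for $\xi$ outside $T'$ the nonzero $h_k$-intervals cancel among the (at least two) facets whose projections still contain $\xi$. The clean way to handle this is induction on $d$: the case $d=1$ is a direct check (a segment $[A^{(1)},A^{(2)}]$ on the line equals the signed difference of the two ``prisms'' $[0,A^{(2)}]$ and $[0,A^{(1)}]$), and for the inductive step I would slice $\bbR^d$ by a generic vertical hyperplane and recognize that the restriction of each $\tilde\chi_{P^{(k)}}$ and of $\tilde\chi_T$ to that slice is exactly the $(d-1)$-dimensional configuration, with the orientations $o^{(k)}$ restricting compatibly via the cofactor expansion of the determinant in \eqref{eq:orientation_general_def}. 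Alternatively — and this is perhaps cleaner to write — one can avoid slicing and instead integrate against a test function, reducing \eqref{eq:T-through-P} to the statement that for every $g\in C_c(\bbR^{d-1})$ one has $\int \cchi_T\,g = -\sum_k(-1)^k o^{(k)}\int \cchi_{P^{(k)}}\,g$, which by Fubini is the pointwise telescoping identity for $h_k$ integrated over $\xi$; this packages the sign analysis into a single application of the divergence theorem (the volume under $T$ equals the signed sum of the volumes under its facets, read as graphs). I would present the proof via this integral/telescoping route and relegate the explicit sign verification to the observation that $(-1)^k o^{(k)}$ is the orientation of $A^{(k)}$ relative to the oriented facet $F^{(k)}$, hence records whether $F^{(k)}$ is a lower or upper facet over any interior $\xi$.
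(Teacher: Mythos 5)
Your skeleton matches the paper's in its outer steps: prove the identity almost everywhere, then recover the pointwise statement by applying the ball averages \eqref{eq:chi_def}, \eqref{eq:chi-P} to both sides, which is exactly how the paper concludes from Lemma \ref{lem:tr-splitting-lemma-two}. Your fiberwise route to the a.e.\ identity is also genuinely appealing: since \eqref{eq:chi-P-tilde} already encodes \emph{signed} intervals between $x_d=0$ and the plane of the facet, the telescoping on each vertical line works for any position of $T$ relative to the hyperplane $x_d=0$, so, if completed, it would bypass the paper's shift-invariance argument (Lemma \ref{lem:tr-splitting-lemma-two}); the paper instead first restricts to $T$ above the hyperplane (Lemma \ref{lem:tr-splitting-lemma-one}, divergence theorem with a vertical vector field) and then removes the restriction. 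Note, however, that your alternative phrasing ``volume under $T$ equals the signed sum of the volumes under its facets, read as graphs,'' taken with the prisms as actual domains, silently reintroduces that restriction; the signed-interval version is the one to keep. Also, your worry about cancellation for $\xi$ outside $T'$ is vacuous: every $(F^{(k)})'$ is contained in $T'$, so all terms vanish there.

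The genuine gap is at the point you yourself identify as the heart of the matter: the sign bookkeeping relating $-(-1)^k o^{(k)}$ to ``lower versus upper facet.'' You relegate it to the ``observation that $(-1)^k o^{(k)}$ is the orientation of $A^{(k)}$ relative to the oriented facet $F^{(k)}$,'' but that is not what $o^{(k)}$ measures: $o^{(k)}$ is the orientation of the \emph{projected} facet vertices in $\bbR^{d-1}$ and does not involve $A^{(k)}$ at all, whereas the orientation of $A^{(k)}$ relative to the ordered facet $\big(A^{(1)},\ldots,\widehat{A^{(k)}},\ldots,A^{(d+1)}\big)$ equals $(-1)^{d+1-k}$ identically under the standing assumption $o_d(A^{(1)},\ldots,A^{(d+1)})=1$, independently of the geometry, so it cannot encode which facets are crossed from below or above. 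The statement actually needed — and proved in the paper by a chain of equivalences using the minor expansion of the determinant in \eqref{eq:orientation_general_def} — is that $o^{(k)}=(-1)^k$ if and only if $e_d$ is an inward direction at $F^{(k)}$, i.e.\ if and only if the $d$-th component of the outward normal of $T$ on $F^{(k)}$ is negative; equivalently $-(-1)^k o^{(k)}=\sgn\big(n^T_d|_{F^{(k)}}\big)$ whenever $F^{(k)}$ is not vertical. Since these signs are the only nontrivial content of \eqref{eq:T-through-P} (the telescoping of signed intervals is elementary once they are known), asserting them as an observation — and in a form that is not correct as stated — leaves the proof incomplete. Supply this lemma (your induction-on-$d$ idea via cofactor expansion would do it, and is essentially the paper's determinant argument) and the rest of your plan goes through.
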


\subsection{Proof}

In the following two lemmas we prove that \eqref{eq:T-through-P} holds almost everywhere.
\begin{lemma}\label{lem:tr-splitting-lemma-one}
The relation \eqref{eq:T-through-P} holds almost everywhere in $\bbR^d$ if $A^{(k)}_d\geq 0$ for all $k=1,\ldots,d+1$ (i.e., if $T$ is entirely above the hyperplane $x_d=0$).
\end{lemma}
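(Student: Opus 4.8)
The plan is to prove Lemma \ref{lem:tr-splitting-lemma-one} by a direct computation of both sides of \eqref{eq:T-through-P} at a generic point, exploiting the fact that, away from a measure-zero set, every characteristic function appearing takes only the values $0$ and $\pm1$. Fix a point $x=(\xi,x_d)\in\bbR^d$ with $\xi\in\bbR^{d-1}$ its projection on the hyperplane $x_d=0$, and assume $x$ is generic: $\xi$ lies in the interior of the projected simplex $T'={\rm conv}((A^{(1)})',\ldots,(A^{(d+1)})')$ or outside it (not on the boundary), and $x_d$ differs from the height of $\partial T$ over $\xi$. First I would dispose of the case $\xi\notin T'$: then $\xi$ lies outside at least one of the projected faces $(F^{(k)})'$, and a short convexity argument (the projected faces of a simplex cover $T'$ with the right incidence structure) shows that the signed count $-\sum_k(-1)^k o^{(k)}\cchi_{P^{(k)}}(x)$ vanishes, matching $\cchi_T(x)=0$.

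The heart of the argument is the case $\xi\in{\rm int}(T')$. Here the vertical line $\{(\xi,t):t\in\bbR\}$ meets $T$ in a (possibly empty) segment $[h_-(\xi),h_+(\xi)]$, where $h_-$ and $h_+$ are the lower and upper envelopes of $\partial T$ over $\xi$; since $T$ lies above $x_d=0$ we have $0\le h_-(\xi)\le h_+(\xi)$. I would argue that each truncated prism $P^{(k)}$ contributes, through $\tilde\cchi_{P^{(k)}}$, the value $1$ precisely when $0<x_d<$ (the height of the plane through $F^{(k)}$ over $\xi$) — call this height $H^{(k)}(\xi)$ — and that the orientation factor $-(-1)^k o^{(k)}$ is exactly the sign with which the face $F^{(k)}$ is traversed as the "roof" versus the "floor" of $T$. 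The key combinatorial identity is that, over a point $\xi$ in the interior of $T'$, the multiset $\{H^{(k)}(\xi)\}$ with signs $-(-1)^k o^{(k)}$ telescopes so that $\sum_k \big(-(-1)^k o^{(k)}\big)\,\mathbf 1_{\{0<x_d<H^{(k)}(\xi)\}} = \mathbf 1_{\{h_-(\xi)<x_d<h_+(\xi)\}}$; i.e. the alternating sum of "indicator of being below face $k$" collapses to "indicator of being between floor and ceiling of $T$." This is where I expect the main obstacle: carefully bookkeeping which faces are "upper" and which are "lower" over $\xi$, and checking that the Grassmann/orientation signs $o^{(k)}$ agree with this up/down classification. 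A clean way to handle it is to note that the function $\xi\mapsto\sum_k(-1)^k o^{(k)} H^{(k)}(\xi)\,\mathbf 1_{(F^{(k)})'}(\xi)$ is, up to sign, the piecewise-linear "signed volume under the simplex" whose distributional/pointwise value is $h_+(\xi)-h_-(\xi)$, and that refining to the half-open strata below each face gives the pointwise statement rather than just the integrated one.

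Concretely I would proceed in the following order. First, record that $\tilde\cchi_{P^{(k)}}=o^{(k)}$-independent and equals $\pm1$ on the open vertical strip below $F^{(k)}$ and $0$ elsewhere, so that for generic $x$ the right side of \eqref{eq:T-through-P} is an integer-valued expression computed from the signs alone. Second, handle $\xi\notin T'$ as above. Third, for $\xi\in{\rm int}(T')$, list the faces $F^{(k)}$ containing $\xi$ in their projection (all $d+1$ of them, since $\xi$ is interior to $T'$) and sort their heights $H^{(1)}(\xi),\ldots,H^{(d+1)}(\xi)$; using that $T$ is a simplex above $x_d=0$, verify that the two extreme heights are $h_+(\xi)$ and $h_-(\xi)$ and that the orientation signs make all intermediate terms cancel pairwise, yielding exactly $\cchi_T(x)\in\{0,1\}$ a.e. Fourth, since both sides of \eqref{eq:T-through-P} are $\L_1$ functions agreeing a.e., and (as noted after \eqref{eq:chi-P}) both sides are the canonical pointwise representatives in the sense of \eqref{eq:chi_def} — characteristic functions taking the averaged intermediate values on lower-dimensional strata — invoke uniqueness of that representative to upgrade a.e.\ equality to pointwise equality, which is exactly the statement of the lemma (and the bridge to Proposition \ref{prop:splitting}). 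The step I anticipate needing the most care is the third one: getting the alternating signs $-(-1)^k o^{(k)}$ to line up with the geometric up/down labelling of the faces, which I would pin down by a determinant/Laplace-expansion computation relating $o_d(A^{(1)},\ldots,A^{(d+1)})=1$ to the $o^{(k)}$ and the sign of $H^{(k+1)}(\xi)-H^{(k)}(\xi)$.
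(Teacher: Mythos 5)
Your overall strategy---computing both sides of \eqref{eq:T-through-P} fiberwise along vertical lines and showing that the signed sum of ``below the plane of $F^{(k)}$'' indicators telescopes to the indicator of $T$---is a legitimate and genuinely different route from the paper, which instead tests \eqref{eq:T-through-P} against $f=\operatorname{div}g$ for a vertical primitive $g$ and uses the divergence theorem to reduce everything to matching outward normals on the faces $F^{(k)}$. However, as written your argument has two problems. First, a geometric misstatement: for $\xi$ in the interior of $T'$ it is \emph{not} true that all $d+1$ projected faces $(F^{(k)})'$ contain $\xi$; by \eqref{eq:chi-P-tilde} a prism $P^{(k)}$ contributes at $(\xi,x_d)$ only if $\xi\in(F^{(k)})'$, and since the vertical line through a generic interior $\xi$ crosses $\partial T$ exactly twice, generically exactly \emph{two} faces contribute (one ``upper'', one ``lower''; already for a tetrahedron in $d=3$ only $2$ of the $4$ projected faces cover a generic $\xi$). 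So the step ``sort the $d+1$ heights and check that the intermediate terms cancel pairwise'' rests on a wrong picture; the correct statement is simpler (a two-term difference of indicators), but it is not the one you argue. Incidentally, the case $\xi\notin T'$ needs no convexity argument at all: every $(F^{(k)})'\subseteq T'$, so all terms vanish identically there.

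Second, and more seriously, the crux of the lemma---that with the normalization $o_d(A^{(1)},\ldots,A^{(d+1)})=1$ the coefficient $-(-1)^k o^{(k)}$ equals $+1$ exactly on the upper facets and $-1$ on the lower facets (equivalently, its sign is opposite to the sign of the $d$-th component of the outward normal $n^T$ on $F^{(k)}$)---is exactly the sign bookkeeping you defer with ``I would pin down by a determinant/Laplace-expansion computation.'' This is precisely the nontrivial content: the paper's proof spends its main chain of equivalences establishing $o^{(k)}=(-1)^k \Leftrightarrow n^T_d<0$ via expansion of the determinant by minors and the inward/outward-vector characterization of a positively oriented simplex. Without carrying out that computation your proposal is a plan rather than a proof. (Also note that your step 4, upgrading a.e.\ equality to pointwise equality, is not part of this lemma---the lemma claims only a.e.\ equality, and the pointwise statement is obtained later in Proposition \ref{prop:splitting} by the averaging in \eqref{eq:chi-P}, after the hypothesis $A^{(k)}_d\geq 0$ is removed.) If you fix the two-face picture and actually prove the orientation identity, your fiberwise argument would constitute a correct, more elementary alternative to the paper's divergence-theorem proof; the hypothesis that $T$ lies above $x_d=0$ enters exactly where you use $0\leq h_-(\xi)$, so that the indicators $\mathbf 1_{\{0<x_d<H^{(k)}(\xi)\}}$ never pick up the $-1$ branch of \eqref{eq:chi-P-tilde}.
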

\begin{proof}
Since all vertices $A^{(k)}$ lie above the hyperplane $x_d=0$, the function $\cchi_P$ defined by \eqref{eq:chi-P-tilde} and \eqref{eq:chi-P} equals $1$ or $0$ almost everywhere and hence is the characteristic function of a proper truncated prism $P$.

Fix an arbitrary test function $f\in \C^{\infty}(\bbR^d)$, and let
\[
g(x_1,\ldots,x_{d-1},x_d) := e_d \int_0^{x_d} f(x_1,\ldots,x_{d-1},\xi) \dd\xi
\]
so that $f = {\rm div} g$
(recall that $e_1,\ldots,e_d$ is the canonical basis of $\bbR^d$).
Multiply \eqref{eq:T-through-P} by $f$ and apply the divergence theorem:
\begin{equation}\label{eq:T-through-P-intermediate}
\int_{\partial T} g\cdot n^T \dd\gamma
=
- \sum_{k=1}^{d+1} (-1)^k \int_{\partial P^{(k)}} o^{(k)} g\cdot n^{P^{(k)}}\dd\gamma,
\end{equation}
where $n^\bullet$ is the outward normal vector (to $T$ or $P^{(k)}$ respectively).

Next, note that $g=0$ on the base of the truncated prism (i.e., for $x_d=0$) by the definition of $g$ and $g\cdot n^{P^{(k)}}=0$ on the sides of the truncated prism (i.e., below $F^{(k)}$).
Hence, in both parts of relation \eqref{eq:T-through-P-intermediate} we have the sum over faces $F^{(k)}$ of the integrals of $g\cdot n^\bullet$, and it remains only to verify that $n^T= -(-1)^k o^{(k)} n^{P^{(k)}}$ for all $k$.

To prove this, fix $k\in\{1,\ldots,d+1\}$ such that $P^{(k)}$ is not degenerate (otherwise the statement is trivial since $n^{P^{(k)}}=0$ and $n^T=0$ on that face of $T$).
Note that $n^{P^{(k)}}_d>0$ (i.e., the vector $n_{P^{(k)}}$ points upwards).
The agreement of the orientation of $n^T$ and of $o^{(k)}$ follows from the following chain of statements:
\begin{align*}
&
	o^{(k)}=(-1)^k
\\\Leftrightarrow~&
	o_d(A^{(1)}, \ldots, A^{(k-1)}, A^{(k+1)}, \ldots, A^{(d+1)},A^{(\ell)}+e_d)=(-1)^k
\\\Leftrightarrow~&
	o_d(A^{(1)}, \ldots, A^{(k-1)}, A^{(\ell)}+e_d, A^{(k+1)}, \ldots, A^{(d+1)})=1
\\\Leftrightarrow~&
	\text{$e_d$ is an inward vector w.r.t.\ $F^{(k)}$}
\\\Leftrightarrow~&
	n^T_d<0,
\end{align*}
where $\ell$ is any integer between $1$ and $d+1$ different from $k$.
Here in the first step we used the expansion of determinant by minors and in the third step the following fact:
since the basis $\big(A^{(2)}-A^{(1)}, \ldots, A^{(d+1)}-A^{(1)}\big)$ is positively oriented, a vector $v$ is an inward (or, resp., outward) vector w.r.t.\ $F^{(k)}$ if and only if the orientation of the basis $\big(A^{(2)}-A^{(1)}, \ldots, A^{(k-1)}-A^{(1)}, v, A^{(k+1)}-A^{(1)}, \ldots, A^{(d+1)}-A^{(1)}\big)$ is positive (or, resp., negative).
\end{proof}

\begin{lemma}\label{lem:tr-splitting-lemma-two}
The relation \eqref{eq:T-through-P} holds almost everywhere in $\bbR^d$.
\end{lemma}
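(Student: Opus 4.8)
The plan is to bootstrap from Lemma \ref{lem:tr-splitting-lemma-one}, which already gives \eqref{eq:T-through-P} (a.e.) in the special case where all vertices of $T$ lie above the hyperplane $x_d=0$. To remove that restriction, I would exploit a \emph{shift invariance}: translating $T$ and all its associated truncated prisms $P^{(k)}$ vertically by a vector $c e_d$ does not change either side of \eqref{eq:T-through-P} modulo almost-everywhere equality. Indeed, $\cchi_T$ is simply translated with $T$; for the prisms, inspecting the definition \eqref{eq:chi-P-tilde}, the region $\{0<x_d<\sum \alpha_i x_i\}$ (or its reverse) is replaced by $\{c<x_d<c+\sum\alpha_i x_i\}$ when the top face is shifted, but the key point is that $\tilde\cchi_{P^{(k)}}$ defined with the base at height $c$ differs from the one with base at height $0$ by the characteristic function (with sign) of a \emph{genuine prism of finite height} sitting over the same projected simplex $\mathrm{conv}((A^{(1)})',\ldots)$; these finite-height correction terms, summed against the signs $-(-1)^k o^{(k)}$, telescope to zero almost everywhere by the same divergence-theorem bookkeeping as in Lemma \ref{lem:tr-splitting-lemma-one} (the base and side contributions vanish, and the top-face contributions cancel in the alternating sum). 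So both sides of \eqref{eq:T-through-P} change by the \emph{same} quantity, hence the identity for shifted $T$ is equivalent to the identity for the original $T$.

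Concretely, the argument runs: given an arbitrary simplex $T$ with vertices $A^{(1)},\ldots,A^{(d+1)}$, pick $c>0$ large enough that $A^{(k)}_d + c \geq 0$ for all $k$, and set $\hat A^{(k)} := A^{(k)} + c e_d$, $\hat T := \mathrm{conv}(\hat A^{(1)},\ldots,\hat A^{(d+1)})$. By Lemma \ref{lem:tr-splitting-lemma-one}, \eqref{eq:T-through-P} holds a.e.\ for $\hat T$ and the prisms $\hat P^{(k)} = \P(\hat A^{(1)},\ldots)$ (note the orientations $o^{(k)}$ are unchanged since projections onto $x_d=0$ are unaffected by vertical shifts). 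Then I would show, term by term, that $\cchi_{\hat T}(x) = \cchi_T(x - c e_d)$ and $\cchi_{\hat P^{(k)}}(x) = \cchi_{P^{(k)}}(x - c e_d) + (\text{sign})\cdot\rho_{k}(x)$, where $\rho_k$ is the characteristic function of the finite prism $\{x' \in \mathrm{conv}((A^{(1)})',\ldots \widehat{(A^{(k)})'} \ldots) \text{ and } 0 < x_d < c\}$ (with appropriate sign depending on the orientation of the top face). Substituting into \eqref{eq:T-through-P} for $\hat T$ and shifting $x \mapsto x + c e_d$, the desired identity for $T$ follows provided $\sum_k (-1)^k o^{(k)} \rho_k = 0$ a.e. This last vanishing is exactly the statement that a constant-height ``prismatic slab'' over the projected simplex is split consistently into constant-height slabs over the projected faces — which is the $d$-dimensional inclusion–exclusion for a cylinder and can be verified by the same divergence-theorem argument used in Lemma \ref{lem:tr-splitting-lemma-one}, now with the roles of the ``top'' face and ``base'' face played by the two horizontal faces at heights $c$ and $0$.

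The main obstacle I anticipate is making the bookkeeping of signs in the prism-difference formula precise: when the vertical shift moves a vertex from below to above the hyperplane, the sign convention in \eqref{eq:chi-P-tilde} (the $1$ versus $-1$ branches, governed by whether $x_d$ is between $0$ and $\sum\alpha_i x_i$ or between $\sum\alpha_i x_i$ and $0$) interacts with the orientation $o^{(k)}$ in a way that must be tracked carefully so that the correction terms $\rho_k$ genuinely cancel in the alternating sum rather than merely cancelling up to sign. The cleanest way to handle this is probably to avoid case analysis altogether and instead argue directly at the level of the signed densities $\tilde\cchi$: establish the distributional identity $\tilde\cchi_{\hat P^{(k)}}(\cdot + c e_d) - \tilde\cchi_{P^{(k)}} = \sigma_k \mathbf{1}_{\Pi_k}$ for an explicit finite slab $\Pi_k$ and sign $\sigma_k = o^{(k)}(-1)^{k}$ (up to a global constant), which then collapses the whole computation into one application of the divergence theorem identical in structure to \eqref{eq:T-through-P-intermediate}. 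Once \eqref{eq:T-through-P} is known a.e., promoting it to a pointwise identity (as claimed in Proposition \ref{prop:splitting}) is routine: both sides are, by construction, the pointwise regularizations \eqref{eq:chi_def} of their a.e.-defined counterparts, and the remark following \eqref{eq:chi_def} guarantees that equality a.e.\ of polytope-type characteristic functions forces equality of their regularized point values.
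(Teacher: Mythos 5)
Your overall strategy is the same as the paper's: bootstrap from Lemma \ref{lem:tr-splitting-lemma-one} via a vertical shift, and control the discrepancy between ``shifted prisms'' and ``prisms of the shifted simplex,'' which is a slab of height $c$ over each projected face $(F^{(k)})'$. But two points in your write-up need repair, and one of them is a genuine gap. First, the sign bookkeeping you flag as the main obstacle is resolved incorrectly in your ``cleanest way'' refinement: from \eqref{eq:chi-P-tilde}, for every $k$ one has, a.e., $\cchi_{\P(A^{(1)}+ce_d,\ldots)} = \cchi_{P^{(k)}}(\cdot-ce_d) + \sgn(c)\,\mathbf{1}_{\Pi_k}$, where $\Pi_k$ is the slab $\{x'\in (F^{(k)})',\ 0<x_d<c\}$ (this is just the signed-interval additivity of the intervals $(0,h)=(0,c)\cup(c,h)$ on each vertical line). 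So the sign of the correction is $\sgn(c)$, independent of $k$ and of the orientation of the top face; it is \emph{not} $\sigma_k=o^{(k)}(-1)^k$, not even up to a global constant. Had it been, the weighted sum $\sum_k(-1)^k o^{(k)}\sigma_k\mathbf{1}_{\Pi_k}$ would be $\pm\sum_k\mathbf{1}_{\Pi_k}\neq 0$ and the argument would collapse. With the correct ($k$-independent) sign, what you must prove is exactly $\sum_{k=1}^{d+1}(-1)^k o^{(k)}\mathbf{1}_{(F^{(k)})'}=0$ a.e.\ in $\bbR^{d-1}$.

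Second, that cancellation is asserted (``the top-face contributions cancel in the alternating sum'') but not proved, and the mechanism you gesture at is circular: if you integrate your slab decomposition against ${\rm div}\,g$ with a vertical field, the vanishing of the sum of the top-face fluxes \emph{is} the identity in question. The fact is true and can be proved directly — e.g., apply the divergence theorem on $T$ itself to $g=f(x_1,\ldots,x_{d-1})e_d$, whose flux through $F^{(k)}$ equals $-(-1)^k o^{(k)}\int_{(F^{(k)})'}f\,dx'$ by the normal-sign computation already carried out in the proof of Lemma \ref{lem:tr-splitting-lemma-one} (alternatively, note that a.e.\ vertical line meets $\partial T$ in exactly one ``upper'' and one ``lower'' face) — but some such argument must be supplied. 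The paper closes this step differently and more economically: it observes that the difference of the two sides of \eqref{eq:chi_P-invar} depends only on the projections $(F^{(k)})'$ and not on the $x_d$-coordinates of the vertices, so one may raise all vertices until Lemma \ref{lem:tr-splitting-lemma-one} applies to both $T$ and its shift $S_DT$, whence the difference equals $S_D\circ\cchi_T-\cchi_{S_DT}=0$ a.e., with no separate cancellation lemma needed. With the sign corrected and the projected-face identity actually proved, your direct route is a valid alternative.
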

\begin{proof}
In view of the previous lemma, we need only to show that both sides of \eqref{eq:T-through-P} are invariant w.r.t.\ $S_D$, a dilatation in $x_d$ by distance $D$.
More precisely, we need to note that $S_D \cchi_T = \cchi_{S_D T}$  almost everywhere (which follows directly from the definition of $\cchi$) and prove that
\begin{equation}\label{eq:chi_P-invar}
\sum_{k=1}^{d+1} (-1)^k
	o^{(k)}\,
	S_D\circ \cchi_{P^{(k)}}
=
\sum_{k=1}^{d+1} (-1)^k
	o^{(k)}\,
	\cchi_{S_D P^{(k)}}
.
\end{equation}

The proof of \eqref{eq:chi_P-invar} is based on noting that
\[
\cchi_{S_D \P(X^{(1)},\ldots,X^{(d)})}
=
S_D\circ \cchi_{\P(X^{(1)},\ldots,X^{(d)})} + \cchi_{\P(De_d+(X^{(1)})',\ldots,(De_d+X^{(d)})')}
,
\]
i.e., that $\chi$ of a shifted truncated prism is equal to shifted $\chi$ of a truncated prism plus $\chi$ of a prism of height $D$ with the base formed by projections $(X^{(1)})',\ldots,(X^{(d)})'$.
This can be verified by fixing $x\in{\rm conv}((X^{(1)})', \ldots, (X^{(d)})')$ (for $x$ elsewhere the statement is trivial) and considering three cases: $\sum_{i=1}^{d-1} \alpha_i x_i$ is (i) less than $\min(0,D)$, (ii) between $\min(0,D)$ and $\max(0,D)$, and (iii) greater than $\max(0,D)$.

We now take the difference between the left-hand side and the right-hand side of \eqref{eq:chi_P-invar}:
\begin{equation}\label{eq:chi-P-shifted-difference}
\sum_{k=1}^{d+1} (-1)^k
	o^{(k)}\,
	S_D\circ \cchi_{P^{(k)}}
-
\sum_{k=1}^{d+1} (-1)^k
	o^{(k)}\,
	\cchi_{S_D P^{(k)}}
=
\sum_{k=1}^{d+1} (-1)^k
	o^{(k)}\,
	\cchi_{S_D (F^{(k)})'}
\end{equation}
and notice that this difference depends only on projections of faces, $(F^{(k)})'$, but not on the $d$th coordinate of the points $A^{(1)},\ldots,A^{(d+1)}$.
Hence we can again shift these points so that Lemma \ref{lem:tr-splitting-lemma-one} applies to both $T$ and $S_D T$ and hence conclude that the difference \eqref{eq:chi-P-shifted-difference} equals $S_D\circ \cchi_T - \cchi_{S_D T} = 0$ almost everywhere.
\end{proof}

We now finalize the proof of Proposition \ref{prop:splitting}.
\begin{proof}[Proof of Proposition \ref{prop:splitting}]
Denote $f(x) = \cchi_T(x) + \sum_{k=1}^{d+1} (-1)^k
	o^{(k)}\,
	\cchi_{P^{(k)}}(x)$.
From \eqref{eq:chi_def} and \eqref{eq:chi-P} we have
\begin{equation} \label{eq:prop-proof-final}
f(x)
=
\lim_{\rho\to0} \frac1{|B_\rho(x)|} \int_{B_\rho(x)}
f(\xi) \dd\xi.
\end{equation}
Due to Lemma \ref{lem:tr-splitting-lemma-two}, $f(\xi)=0$ for almost all $\xi$; hence the right-hand side of \eqref{eq:prop-proof-final} is zero, hence $f(x)=0$ for all $x\in\bbR^d$.
\end{proof}

\section{A Matlab implementation of the computation of $\len{T,r}$} \label{sec:matlab}

The code given in this appendix closely follows the algorithm outlined in Section \ref{sec:computing_Len} except for the following optimization.

Instead of the splitting \eqref{eq:triangle_splitting} we introduce three additional points $D=(B_1, A_2)$, $E=(C_1, A_2)$, $F=(C_1, B_2)$ (see Figure \ref{fig:triangle_splitting_alt}) and represent
\begin{align*}
o(ABC) \tilde\chi^{\mM}_{\triangle(ABC)}
=
-~& o(ADB) \tilde\chi^{\mM}_{\triangle(ADB)}
\\ +~& o(AEC) \tilde\chi^{\mM}_{\triangle(AEC)}
\\ +~& o(FBC) \tilde\chi^{\mM}_{\triangle(FBC)}
\\ +~& o(EDB) \tilde\chi^{\mM}_{\square(EDBF)}
,
\end{align*}
where $\square(EDBF)$ is the rectangle $EDBF$.
The contribution of the latter is computed by a formula analogous to \eqref{eq:sum_over_a_rectangle_answer}.

\begin{figure}
\begin{center}
\includegraphics{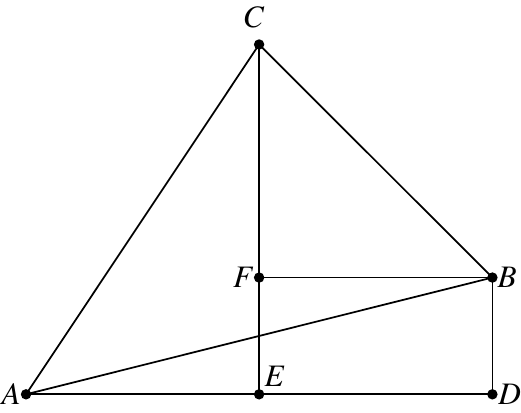}
\end{center}
\caption{An alternative splitting of $\triangle(ABC)$ into three right triangles and a rectangle.
}
\label{fig:triangle_splitting_alt}
\end{figure}

\subsection*{The Matlab Code}

{\small
\begin{verbatim}
function BondVol = BondVol_tetrahedron(v, r)
% BondVol(v,r) of the tetrahedron with vertices v

    % change the coordinates so that r = (0,0,1)
    [gcd12, c12, d12] = gcd(r(1),r(2));
    [gcd3, c3, d3] = gcd(gcd12,r(3));
    if(gcd3~=1)
        % reduce to the case gcd3=1
        r = r/gcd3;
        gcd12 = gcd12 /gcd3;
    end
    M = eye(3);
    if(r(1) ~=0 || r(2) ~=0 || r(3) == 0)
        M = [c12,d12,0;r(2)/gcd12, -r(1)/gcd12, 0;0,0,1];
        M = [-r(3),0,gcd12;0,1,0;c3,0,d3]*M;

        r = M*r;
        v = M*v;
    end

    invM = inv(M);
    BondVol = BondVol_prism(v(:,[1;2;3]), invM) + ...
        BondVol_prism(v(:,[4;3;2]), invM) + ...
        BondVol_prism(v(:,[4;2;1]), invM) + ...
        BondVol_prism(v(:,[1;3;4]), invM);
end

function BondVol = BondVol_prism(v, invM)
% BondVol(v,r) of the truncated prism,
% formed by three vertices v and their projections on the XY plane;
% r is assumed to be (0,0,1);
% invM*r and invM*v are the original positions.

    % shift the triangle (and the prism) so that v1 = (0,0,*)
    v([1 2],2)=v([1 2],2)-v([1 2],1);
    v([1 2],3)=v([1 2],3)-v([1 2],1);
    v([1 2],1)=[0;0];

    if(round(det([[1;1;1], v([1 2],:)']))==0)
        % if the prism is degenerate
        BondVol = 0;
    else
        % find coefficients [c4; c1; c2] of the function c1*i + c2*j + c4
        % that we sum
        c4c1c2 = [[1;1;1], v([1 2],:)']\v(3,:)';

        % reduce to integration over a triangle formed by (0,0), (v2x, v2y),
        % (v3x, v3y), which is further reduced to integration over three
        % right triangles and one rectrangle
        BondVol = + right_triangle_sum(c4c1c2, v([1 2],2), invM) ...
            - right_triangle_sum(c4c1c2, v([1 2],3), invM) ...
            - right_triangle_sum(c4c1c2 + ...
                [1;0;0]*([0 v([1 2],3)']*c4c1c2), ...
                v([1 2],2)-v([1 2],3), invM) ...
            + prod(v([1 2],3)-[v(1,2);0])*...
                ([1 0.5*(v([1 2],3)+[v(1,2);0])'] * c4c1c2);
    end
end

function ans = right_triangle_sum(c4c1c2, pt, invM)
% sum  c4 + c1 x + c2 y  over a right triangle (0,0), (pt(1),0), pt

    b = pt(1); % x-side
    a = pt(2); % y-side

    if(a==0 || b==0)
        ans = 0; return;
    end

    % reduce to a>0 and b>0
    orientation = sign(a)*sign(b);
    c4c1c2 = c4c1c2 .* [1;sign(b);sign(a)];
    invM = invM*diag([sign(b), sign(a), 1]);
    a=abs(a); b=abs(b);

    % sum the constant term
    ans = 1/2*a*b*c4c1c2(1);
    c1c2 = c4c1c2(2:3);

    % sum the linear terms, using reduction to Sab
    gcdab = gcd(a,b);
    Sab_ans=Sab(a,b,gcdab);
    ans = ans + c4c1c2(2) * (1/6*a*(b-1)*(2*b-1) - Sab_ans);
    ans = ans + c4c1c2(3) * (1/4*(a-1)*(b-1) + 1/4*(gcdab-1) ...
        + 1/12*(a^2)/b*(b-1)*(2*b-1) + 1/12/b*(b-gcdab)*(2*b-gcdab) ...
        - a/b*Sab_ans);

    % sum over sides:
    ans = ans + 0.5*(a-1)* [b a/2]*c1c2;
    ans = ans + 0.5*(b-1)* [b/2 0]*c1c2;

    % sum over the hypotenuse (we subtract half the contribution)
    ans = ans - 0.5*(gcdab-1)* [b/2 a/2]*c1c2;

    % sum over vertices
    v1 = cross(invM(:,1),invM(:,3)); % invM(:,1) == invM*[1;0;0]
    v2 = cross(invM(:,2),invM(:,3));
    v3 = -cross(invM*[b;a;0],invM(:,3));
    i = 0; j = 0;
    ans = ans + acos(dot(v1,-v3)/norm(v1)/norm(v3))/(2*pi) * [i j]*c1c2;
    i = b; j = a;
    ans = ans + acos(dot(-v2,v3)/norm(v2)/norm(v3))/(2*pi) * [i j]*c1c2;
    i = b; j = 0;
    ans = ans + acos(dot(-v1,v2)/norm(v1)/norm(v2))/(2*pi) * [i j]*c1c2;

    ans = real(ans) * orientation;
end

function ans = Sab(a,b, gcdab)
% sums i/b (a*i mod b) over i=0..b-1

    if(b==0)
        ans=0; return;
    end
    
    ans = 0; multiplier = 1;

    while(a ~= 0)
        ans = ans + multiplier * ...
            (3*b*a^2 + 3*b^2*a + a^2 - 3*a*b + b^2 - 6*a*b*gcdab ...
                + gcdab^2)/(12*a);
        multiplier = multiplier * (-b/a);
        old_b=b; b=a; a=mod(old_b,a);
    end
end

\end{verbatim}
}

\bibliographystyle{plain}
\bibliography{paper}

\end{document}